\documentclass[onefignum,onetabnum]{siamonline190516}

\usepackage{lipsum}
\usepackage{amsfonts}
\usepackage{graphicx}
\usepackage{epstopdf}
\usepackage{algorithmic}
\ifpdf
  \DeclareGraphicsExtensions{.eps,.pdf,.png,.jpg}
\else
  \DeclareGraphicsExtensions{.eps}
\fi

\usepackage{enumitem}
\setlist[enumerate]{leftmargin=.5in}
\setlist[itemize]{leftmargin=.5in}

\newsiamremark{rmk}{Remark}
\newsiamremark{ex}{Example}
\newsiamremark{hypothesis}{Hypothesis}
\crefname{hypothesis}{Hypothesis}{Hypotheses}
\newsiamthm{claim}{Claim}

\headers{Statistical analysis of random objects via  metric measure Laplacians}{G. Mordant, and A. Munk}

\title{Statistical analysis of random objects via  metric measure Laplacians
}

\author{Gilles Mordant\thanks{Institut f\"ur Mathematische Stochastik, Universit\"at G\"ottingen.\funding{G. Mordant gratefully acknowledges the funding by the DFG SFB 1456-A04.}}
\and Axel Munk{\footnotemark[1] $^,$\thanks{Max Planck Institute for Biophysical Chemistry and DFG Cluster of Excellence ``Multiscale Bioimaging : from Molecular Machines to Networks of Excitable Cells'' (MBExC), University Medical Center, G\"ottingen.} }
}
\ifpdf
\hypersetup{
  pdftitle={Statistical analysis of random objects via  metric measure Laplacians},
    pdfauthor={G. Mordant, and A. Munk}
}
\fi

\usepackage{amsmath}
\usepackage{amsfonts}
\usepackage{dsfont}
\usepackage{amssymb}
\usepackage{txfonts}

\usepackage{dsfont}
\usepackage{tikz}
\usetikzlibrary{positioning,shadows,arrows}
\usepackage{pgfplots}
\usepackage{url}
\usepackage{comment}

\newcommand{\tr}{\operatorname{tr}}
\newcommand{\reals}{\mathbb{R}}
\newcommand{\ind}{\mathds{1}}
\newcommand{\diff}{\mathrm{d}}
\newcommand{\normal}{\mathcal{N}}
\newcommand{\dto}{\rightsquigarrow}

\usepackage{mathtools}

\providecommand{\trace}{\operatorname{tr}}
\providecommand{\diag}{\operatorname{diag}}
\providecommand{\normal}{\mathcal{N}}
\providecommand{\reals}{\mathbb{R}}
\providecommand{\QQ}{\mathbb{Q}}

\providecommand{\Oh}{\mathrm{O}}

\providecommand{\1}{\mathds{1}}
\providecommand{\ind}{\mathds{1}}

\providecommand{\dto}{\rightsquigarrow}
\providecommand{\dist}{\mathfrak{d}}
\providecommand{\eps}{\varepsilon} 
\providecommand{\fclass}{\mathcal{F}} 
\providecommand{\diff}{\mathrm{d}}
\providecommand{\prob}{\mathrm{P}}

\providecommand{\expec}{\mathbb{E}}
\providecommand{\Gproc}{\mathbb{G}}
\providecommand{\argmin}{\operatornamewithlimits{\arg\min}}

\providecommand{\e}{\mathbf{e}}

\usepackage{color}
\definecolor{darkraspberry}{rgb}{0.53, 0.15, 0.34}
\definecolor{britishracinggreen}{rgb}{0.0, 0.26, 0.15}
\definecolor{greenalt}{rgb}{0.0, 0.8, 0.15}
\definecolor{burntumber}{rgb}{0.54, 0.2, 0.14}
\definecolor{royalblue}{RGB}{0,78,156}
\definecolor{olive}{RGB}{128,128,0}

\begin{document}

\maketitle

\begin{abstract}
In this paper, we consider a certain convolutional Laplacian for metric measure spaces and investigate its potential for the statistical analysis of complex objects.
The spectrum of that Laplacian serves as a signature of the space under consideration and the eigenvectors provide the principal directions of the shape, its harmonics. These concepts are used to assess the similarity of objects or understand their most important features in a principled way which is illustrated in various examples.
Adopting a statistical point of view,  we define a mean spectral measure and its empirical counterpart. The corresponding limiting process of interest is derived and statistical applications are discussed.
\end{abstract}

\begin{keywords}
Spectral analysis; Metric measure spaces; Laplace operators; Harmonic functions
\end{keywords}

\begin{AMS}
 62R20; 62P10; 47N30
 \end{AMS}

\section{Introduction}
\label{sec: Intro}
With new, more and more complex acquired data, the need for tailored analysis tools presses. In particular, methods which respect the underlying geometric structures in the data have become important in various fields such as computer vision \cite{ezuz2017gwcnn,torralba2003context}, engineering \cite{schmiedl2015shape} or molecular biology \cite{agarwal2007fast}, to provide only a few examples.\\
\bgroup
To capture important geometric features of such data, in a fast-developing line of work \cite[and the references therein]{Memoli2007,memoli2011gromov,solomon2016entropic,chowdhury2021quantized,alaya2022theoretical}, it has been proposed to model each data object (e.g., a point cloud) as a metric measure space $(X,d_X,\mu)$, i.e., consisting in a metric space endowed with a (probability) measure. A particular example would be a graph with weighted vertices and edges inducing a distance. In that case, $X$ is then the set of points/nodes, $d_X$ provides the distance function between the latter points (e.g., the weighted shortest-path distance)  and $\mu$ encodes the ``importance weight" associated to each node.
\bgroup
As a concrete example we mention a protein structure (see Section~\ref{sec: Num} for details) represented by its chain of carbon atoms. This can be modelled as a metric measure space. The locations\footnote{The location is estimated indirectly via the electrons density. We refer to \cite{rhodes2010crystallography} for more details.} of these atoms in the three-dimensional space constitute the nodes and this set of points is metrised with the Euclidean distance. The mass is either chosen uniform as in \cite{weitkamp2020gromov} or proportional to the inverse of the temperature factor, as the temperature factor encodes the uncertainty in the measurements.
\egroup
In the sequel, we will often abbreviate metric measure space to mm-space as originating in \cite{gromov1999metric}.

According to \cite[p.117]{gromov1999metric}  two metric measure spaces $(X,d_X,\mu)$ and $(Y,d_Y,\nu)$ are said to be \textit{isomorphic} if  
\begin{equation}
\label{eq: Isomorph}
\exists \text{ a bijection } \psi : X \to Y : 
\begin{cases}
\psi_{\#} \mu = \nu \\
d_Y( \psi (x), \psi(y)) = d_X (x,y),
\end{cases} 
\end{equation}
where $\psi_{\#} \mu$ is the pushforward measure of $\mu$ by $\psi$.
This means, that in addition to being an isometry, $\psi$ must transport the mass of $\mu$ to $\nu$.
In this respect, a very convenient distance between two such spaces --- after ``quotienting out''  isomorphisms --- is the Gromov--Wasserstein distance \cite{Memoli2007} defined %
as 
\begin{equation}
\label{eq: GW}
 GW_p(X,Y):= \left\{ \inf_{\pi \in \Gamma(\mu,\nu)}\int_{X\times Y}\int_{X\times Y} \lvert d_X(x,x') - d_Y(y,y') \rvert^p \diff \pi(x,y) \diff \pi(x',y')  \right\}^{1/p},
\end{equation}
where $ \Gamma(\mu,\nu)$ is the set of couplings of $\mu$ and $\nu$. Even though appealing, the practical use of the concept is limited by computational difficulties. Indeed, in general, \eqref{eq: GW} constitutes a quadratic program and is as such NP-hard \cite{pardalos1991quadratic}. In practice, one then resorts either to regularised versions of the problem \cite{peyre2016gromov,sejourne2021unbalanced} or to lower bounds \cite{Memoli2007,liebscher2018new,memoli2018distance,memoli2021ultrametric,weitkamp2020gromov} on the latter distance. In the first case, there is no general proof of convergence of the proposed projected gradient descent and the quantity returned is not necessarily the optimum in \eqref{eq: GW}. In general, the geometry of $GW_p$ is theoretically poorly understood from a computational point of view. In the second case, the lower bounds can be seen as measuring a distance between certain characteristics (or signatures) of the mm-spaces. However, this often comes with some loss of information.  In particular, the fact that these lower bounds are zero does not imply that the mm-spaces are isomorphic. Still, such signatures can convey important information and help discriminate between two mm-spaces, see \cite{gellert2019substrate} for an application in structural chemistry and \cite{weitkamp2020gromov} for an application in protein matching.

In this paper, we will take an alternative route and focus on the $\rho$-Spectrum, a signature related to the spectrum of a family of Laplace operators indexed by a parameter $\rho$ introduced in \cite{burago2019spectral}.  We discuss its properties, show how this spectrum relates to other mm-spaces descriptors such as lower bounds on the $GW_p$ distance and derive statistical tools for inference on the $\rho$-Spectrum.
Contrarily to computing the Gromov--Wasserstein distance, computing the $\rho$-Spectrum can be performed in less than $\Oh(K^5)$ operations, given an observed mm-space with $K$ nodes, see Subsection \ref{sec: CompIss}. 

We begin by a recap on classical graph Laplacians in Section~\ref{sec: ClasGraphLapl} and then introduce the $\rho$-Laplace operators in Section~\ref{sec: Lapl} along with some of their properties. We show various potential uses of quantities based on this mm-space Laplacian in Section~\ref{sec: Num} and discuss a series of (numerical) examples. In particular, in Section~\ref{sec: Harmonics}, we illustrate the discriminative power of the $\rho$-Laplacian for a network analysis of the temporal evolution of microtubule cell networks. A link to semi-supervised learning problems is further developed in Section~\ref{sec: SSL}. 
In Section~\ref{sec: Inf}, we provide some foundations for statistical inference and analyse the empirical mean spectrum by providing a (bootstrap) central limit theorem and concentration bounds. As an application, confidence bands for the second largest (Fiedler) eigenvalue as a function of $\rho$ are discussed an applied to animal shape data.
\vspace{12pt}

\textit{Notation.}  We use the notation $\e_i$ for the $i$-th basis vector in $\reals^n$, i.e., the vector filled with 1 at the position $i$ and zero elsewhere. The operator $\Lambda(\cdot)$ from the set of real, symmetric $n\times n$ matrices to $\reals_+^n$ is the operator returning the eigenvalues in decreasing order. In the context of a graph, considering an edge $e$ between the neighbours $j$ and $\ell$, $in(e)=j$ and $out(e)=\ell$. As we will consider only undirected graphs in the sequel, any arbitrary traversing order is valid and unimportant. We will also use $d_e$ to mean $d_{in(e),out(e)}$. For a space $\mathcal{S}$, $\mathcal{P}(\mathcal{S})$ denotes the set of Borel probability measures on $\mathcal{S}$. For a metric space $(X,d)$, a  set $A\subset X$ and some $r\geq0$, $A^r$ denotes the closed $r$-neighbourhood of A, i.e., $A^r:=\{x \in X: d(x,A)\le r\}$. The set of distance matrices of size $K\times K$ will be denoted by $S_+^K$.  Finally, $\llbracket K \rrbracket$ denotes  $\{1,\ldots, K\}$.
\section{Classical graph Laplacian}
\label{sec: ClasGraphLapl}

Given a weighted graph $G =(V, E, W)$, with $W$ the weighted adjacency matrix, let  $\operatorname{Deg}$ denote the diagonal degree matrix of the edges. 
We use the notation $j\sim \ell$ to mean that vertices $j$ and $\ell$ are connected. We do not allow a vertex to be connected with itself. Then we recall that
$$\operatorname{Deg}_{j,\ell}=
\begin{cases} \sum_{j'} W_{j,j'} \quad\text{ if } j= \ell
\\ 0 \qquad \text{    otherwise},
\end{cases}
$$
where
$$
\begin{cases} W_{j,\ell}>0 \quad\text{ if }j\sim \ell,
\\ W_{j,\ell}=0 \qquad \text{    otherwise}.
\end{cases}
$$
The graph Laplacian $L$ (see e.g., \cite{spielman2010algorithms}) is defined as $\operatorname{Deg}-W$. The first (read smallest) eigenvalue of the graph Laplacian is null with corresponding eigenvector $\mathbf{1}$. It is a well known fact that if the graph is connected, the second smallest eigenvalue is strictly positive.
There further is a particularly convenient way of writing a graph Laplacian as a sum over all edges $e$ of size-two Laplacians, i.e.,
\begin{equation}
\label{eq: ConvProp}
L = \sum_{e \in E} W_{in(e),out(e)} b_e b_e^\top,
\end{equation}
where $b_e:= \e_{in(e)} - \e_{out(e)}$ (see the Introduction). %

More importantly for our purposes, we note that the Laplacian of a finite graph helps characterise the latter and its spectrum is an invariant of the graph. As for the lower-bounds touched upon in Section~\ref{sec: Intro}, the signature does uniquely identify a graph but there are non-isomorphic graphs with the same signature; i.e., graphs with the same Laplace spectrum that are different. This situation is depicted on Fig.~\ref{fig: FigIso}.

The eigenvalues of a connected graph Laplacian  $L \in \reals^K \times \reals^K$, $0=\lambda_1<\lambda_2 \leq \ldots\leq\lambda_{K}$ also have a useful interpretation. For instance, the second smallest eigenvalue \textemdash also called Fiedler eigenvalue \textemdash is a solution to the constrained optimisation problem
\begin{equation}
\label{eq: eigValLapl}
\min_{u \in \reals^K : \bar{u}=0, u^\top u=1 } u^\top Lu =  \min_{u \in \reals^K : \bar{u}=0, u^\top u=1 } \sum_{e \in E} W_{in(e),out(e)} ( u_{in(e)}-u_{out(e)} )^2 ,
\end{equation}
where $\bar{u}=0$ is the arithmetic mean of the entries of the vector $u$.
This corresponds to finding a one-dimensional embedding of the graph on the line that ought to respect the edge lengths of the graph.
We refer to Fig.~1 in \cite{levy2006laplace} for a visualisation of the fact that this vector captures the ``directionality'' of the shapes approximated by a mesh. In a sense, the eigendecomposition of the Laplacians ``captures the harmonics of the shape''. If the graph is not connected, the number of zero eigenvalues will provide the number of disconnected components.
\bgroup

Also, it is important to say that \eqref{eq: eigValLapl} is a relaxation of the minimal ratio cut problem, i.e., the problem of splitting the graph into two parts of roughly the same size with the smallest connectivity possible\footnote{To be precise, the objective there is to solve 
\[
 \min_{u \in \{0,1\}^K } \tr \left( L \left( \frac{uu^\top}{ \lvert u \rvert} + \frac{(\mathbf{1}-u)(\mathbf{1}-u)^\top}{K - \lvert u \rvert} \right)\right).\]
 }.
In the latter problem, the vectors $u$ must belong to $\{0,1\}^K$ before being renormalised. 
The eigenvector is thus an approximation of the vector giving the partition in minimal ratio cut problem. Often, one heuristically chooses the point where the eigenvector's sign changes as the splitting point even though other methods exist \cite[Section~5.1]{von2007tutorial}. 

The largest eigenvalue can also be better understood through the well-known following result. 
\begin{lemma}
\label{lem: largestEig}Let $L$ be the graph Laplacian of a graph with $K$ nodes, then 
\[
\lambda_{\max}(L) \geq \max_{j \in \llbracket K \rrbracket} \operatorname{Deg}_{j,j}.
\]
\end{lemma}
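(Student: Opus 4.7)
The plan is to exploit the Rayleigh quotient characterisation of the largest eigenvalue of a symmetric matrix, namely
\[
\lambda_{\max}(L) \;=\; \max_{v \in \reals^K,\, v^\top v = 1} v^\top L v,
\]
and to evaluate the quadratic form at a judiciously chosen unit vector. A particularly natural choice is a standard basis vector $\e_j$, since its quadratic form picks out the $j$-th diagonal entry of $L$.

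More concretely, I first observe that, because self-loops are forbidden, $W_{j,j}=0$, and therefore $L_{j,j} = \operatorname{Deg}_{j,j} - W_{j,j} = \operatorname{Deg}_{j,j}$. One can either read this directly from the definition $L = \operatorname{Deg} - W$, or equivalently derive it from the convolutional representation \eqref{eq: ConvProp}: since $b_e^\top \e_j \in \{-1,0,1\}$ equals $\pm 1$ exactly when the edge $e$ is incident to $j$, we get
\[
\e_j^\top L \e_j \;=\; \sum_{e \in E} W_{in(e),out(e)} (\e_j^\top b_e)^2 \;=\; \sum_{\ell : \ell \sim j} W_{j,\ell} \;=\; \operatorname{Deg}_{j,j}.
\]

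Combining this with the Rayleigh quotient bound applied to the unit vector $\e_j$ yields $\lambda_{\max}(L) \geq \e_j^\top L \e_j = \operatorname{Deg}_{j,j}$ for every $j \in \llbracket K \rrbracket$. Taking the maximum over $j$ on the right-hand side gives the claimed inequality. There is essentially no obstacle here: the only care required is to use the no-self-loop convention to identify $L_{j,j}$ with $\operatorname{Deg}_{j,j}$, after which the result is an immediate consequence of the variational principle.
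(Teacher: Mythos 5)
Your proof is correct. The paper offers no proof of this lemma at all (it is presented as a well-known fact), and your Rayleigh-quotient argument --- evaluating $v^\top L v$ at the unit vector $\e_j$ and using $L_{j,j}=\operatorname{Deg}_{j,j}$ under the no-self-loop convention --- is the standard, complete justification.
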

In \cite{berman2011lower}, they further prove that, for a graph with more than 3 vertices, the second largest eigenvalue of the Laplacian is bounded below by the third largest degree.

\egroup

With these elements in mind, we can now go back to the subject of interest: mm-spaces.
\section{The $\rho$-Laplacian for a metric measure space}
\label{sec: Lapl}

Let us place ourselves in a context where the observations at hand are not graphs anymore but mm-spaces. %
\subsection{Definition and associated Dirichlet form}
In light of Section~\ref{sec: ClasGraphLapl}, %
we can now define the convolutional Laplacian first introduced in \cite{burago2019spectral} and exhibit some of its properties.
\begin{definition}
Let $(X,d,\mu)$ be a metric-measure space and $\rho>0$. Then, the $\rho$-Laplacian $\Delta_X^\rho: L^2(X,\mu)\to L^2(X,\mu)$ is defined by
\[
\Delta_X^\rho u(x):= \frac{1}{\rho^{2} \mu (B_\rho(x ))} \int_{B_\rho(x )} \big(u(x) -u(y) \big) \mathrm{d}\mu(y), 
\]
for $u \in  L^2(X,\mu)$ and where $B_\rho(x)$ is the metric ball of radius $\rho$ centred at $x\in X$. 
\end{definition}
The corresponding Dirichlet form is then 
\[
D_X^{\rho}(u):= \frac{1}{2} \iint_{d_X(x,y)<\rho} \big(u(x)-u(y)\big)^2 \mathrm{d}\mu(x) \mathrm{d}\mu(y).
\]
In the case where the mm-space under consideration is discrete, i.e., $\mu= \sum_j \mu_j\delta_{x_j}$ with $\mu_j >0$ for all $j \in \llbracket K \rrbracket$,  the Dirichlet form can be rewritten as
\begin{equation}
\label{eq: DirichletForm}
D_X^{\rho}(u)= \frac{1}{2} \sum_{j,\ell} \big(u(x_j)-u(x_\ell)\big)^2 \mu_j\mu_\ell \1_{\{d_X(x_j,x_\ell)<\rho\}}.
\end{equation}
This is exactly the same formula as the definition of the Laplacian quadratic form\footnote{Some definitions do not involve the constant 1/2. As this is merely a scaling factor, it is of little importance.} as in Eq.~\eqref{eq: eigValLapl} of an auxiliary weighted graph with weights  $W_{j,\ell}$ 
\[
W_{j,\ell}= \mu_j\mu_\ell \1_{\{d_X(x_j,x_\ell)<\rho\}},
\]
A representation of a metric measure space with a corresponding auxiliary graph is provided in Fig.~\ref{fig: AuxGraph}.
The $\rho$-Laplacian thus corresponds to the classical graph Laplacian of this auxiliary graph. We now provide one possible interpretation of the weights of the auxiliary graph. 

\begin{rmk}[ ``Gravitational'' interpretation of the auxiliary graph]
The weights of the auxiliary graph can be seen as a gravitational type of interaction between the nodes in the mm-space. Indeed, the interaction is proportional to the product of the masses and depends on the distance.  In this case, the node of the auxiliary graph are inherited from the mm-space. The nodes will then be connected with a strength proportional to their mass provided that the distance between the two was smaller than the parameter $\rho$.
\end{rmk}

\begin{figure}
\centering
\begin{tikzpicture}            
   \node[shape=circle,fill,draw=black, scale=1.75] (A) at (0,0) {};
    \node[shape=circle,fill,draw=black, scale=1] (B) at (2.2,0) {};
    \node[shape=circle,fill, draw=black, scale=1 ] (C) at (2.1,1.3) {};
    \node[shape=circle,fill,draw=black,scale=1/1.75] (D) at (-1.3,0.9) {};

    \path [dashed] (A) edge node[above] {} (C);
    \path [-](A) edge node[below] {} (B);
    \path [-](D) edge node[left] {} (A);
    \path [-](B) edge node[right] {} (C);
         \path [dashed](C) edge node[above] {} (D);
           \path [dashed](B) edge node[right] {} (D);        
            \end{tikzpicture}
\qquad \qquad
\begin{tikzpicture}            
     \node[shape=circle,fill,draw=black] (A) at (0,0) {};
    \node[shape=circle,fill,draw=black] (B) at (2.2,0) {};
    \node[shape=circle,fill, draw=black ] (C) at (2.1,1.3) {};
    \node[shape=circle,fill,draw=black] (D) at (-1.3,0.9) {};

    \path [-,thick](A) edge node[below] {} (B);
    \path [-](D) edge node[left] {} (A);
    \path [-](B) edge node[right] {} (C);
     
\end{tikzpicture}
\caption{  \slshape\small Left) A discrete metric measure space with non uniform probability measure. The length of the vertices encode the distances. In dashed are the distances larger than the $\rho$ chosen for this example. Right) The representation of the auxiliary graph for the same $\rho$ as on the left hand side. Only the nodes close enough are connected and the strength of the link depends on the measure of the node in the mm-space. The strength is represented through different thicknesses of the edges. The edges' length however does not bear any meaning.}
\label{fig: AuxGraph}
\end{figure}
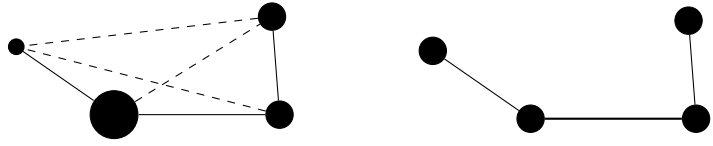

\subsection{Properties}
As advanced by \cite{burago2019spectral}, the underlying Laplacian is stable with respect to metric-measure perturbations. More precisely, if two metric measure spaces are \textit{relative $(\varepsilon, \delta)$-close} (see Definition~\ref{dfn: edClose} below) and additional technical conditions hold, then the lower parts of the spectra of their $\rho$-Laplacians are also close. We refer to Theorem~5.4. in the aforementioned reference for a precise statement. 

\begin{definition}{ \cite[Definition~4.2.]{burago2019spectral}}
\label{dfn: edClose}
Let $\varepsilon,\delta\geq 0.$ We say that two mm-spaces $X=(X,d_X,\mu_X)$ and $Y=(Y,d_Y,\mu_Y)$ are mm-relative $(\varepsilon, \delta)$-close if there exists a semi-metric $d$ on the disjoint union $X \sqcup Y$, extending $d_X$ and $d_Y$, such that  
\[
e^\delta \mu_X(A^\varepsilon) \geq \mu_Y(A) \qquad \text{ and } \qquad e^\delta \mu_Y(A^\varepsilon) \geq \mu_X(A),
\]
for every Borel set $A\subset X \sqcup Y$, where the closed neighbourhood is constructed with respect to the metric $d$.
\end{definition}

Before turning to the spectrum, we recall two important facts about the relative $(\varepsilon, \delta)$-closeness. The latter are stated in \cite{burago2019spectral} and are helpful to relate the proposed concepts to the Gromov-Wasserstein distance. 
\begin{rmk}
$(\varepsilon, 0)$-closeness of mm-spaces is equivalent to the $\varepsilon$-closeness with respect to  $L_\infty$ Gromov-Wasserstein distance.
\end{rmk}
\begin{rmk}
Consider an mm-space $Y' =(Y, d_{Y'}, \mu_{Y'})$ that is a ``blurred'' version of $Y$, in the sense that $\lvert d_{Y'}-  d_{Y}\rvert <\varepsilon$ and 
$e^{-\delta} \le \mu_{Y'} / \mu_{Y} \le e^{\delta}$. Then $Y'$ is $(2\varepsilon, \delta)$ close to $Y$. 
\end{rmk}
\begin{rmk}
If the mm-space under consideration is a Riemannian manifold endowed with its geodesic distance and volume measure, the $\rho$-Laplacian above converges to a constant times the Laplace--Beltrami operator as $\rho \to 0^{+}$.
\end{rmk}
We finish this section by establishing a desirable property of the Laplacian. Before doing so, we recall two important elements from \cite{burago2019spectral}. Setting
\[
\lVert u \rVert_{X^\rho}^2 := \rho^2 \int_X \mu( B_\rho(x)) u^2(x) \mathrm{d}\mu(x),
\]
the eigenvalues of $\Delta_X^\rho$ are defined by the Courant-Fischer-Weyl principle, that is, for $k \in \mathbb{N}$,
\[
\lambda_k(X, \rho) = \inf_{H_{X}^k} \sup_{u \in H_X^k \setminus\{0\} }\  \frac{D_X^{\rho}(u)}{\lVert u \rVert_{X^\rho}^2},
\]
where $H_{X}^k$ is a $k$ dimensional subspace of $L^2(X,\mu)$ and the first infimum is taken over all such $H_{X}^k$.
\begin{proposition}
For two isomorphic metric measures spaces \textemdash recall~\eqref{eq: Isomorph},  the spectra coincide. 
Further, the if $u'$ is a $\rho$-eigenfunction of $(Y,d_Y,\nu)$, then $u' \circ \psi$ is a $\rho$-eigenfunction of $(X,d_X,\mu)$.
\end{proposition}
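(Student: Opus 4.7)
The plan is to exhibit an isometric isomorphism $T : L^2(Y,\nu) \to L^2(X,\mu)$, defined by $Tu' := u' \circ \psi$, that intertwines the two $\rho$-Laplacians, in the sense that $\Delta_X^\rho \circ T = T \circ \Delta_Y^\rho$. Both assertions of the proposition follow at once from this.

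First I would check that $T$ is a well-defined linear bijection with $\lVert Tu' \rVert_{L^2(X,\mu)} = \lVert u' \rVert_{L^2(Y,\nu)}$. Linearity and bijectivity (with inverse $u \mapsto u \circ \psi^{-1}$) are immediate from the bijectivity of $\psi$, and the isometry property is the change-of-variables formula $\int_Y f \, \mathrm{d}\nu = \int_X f \circ \psi \, \mathrm{d}\mu$ applied to $f = (u')^2$, valid because $\psi_{\#}\mu = \nu$. Second, I would establish the geometric identity $\psi(B_\rho(x)) = B_\rho(\psi(x))$ for every $x \in X$: the forward inclusion is the distance-preserving property of $\psi$, and the reverse inclusion uses the same identity together with the surjectivity of $\psi$. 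Combined with the pushforward equation, this yields $\mu(B_\rho(x)) = \nu(B_\rho(\psi(x)))$.

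Third, I would verify the intertwining pointwise. Fixing $x \in X$ and writing $y := \psi(x)$, the change of variables $w = \psi(z)$ together with the geometric identity gives
\begin{align*}
\Delta_X^\rho(Tu')(x) &= \frac{1}{\rho^{2}\mu(B_\rho(x))} \int_{B_\rho(x)} \bigl(u'(\psi(x)) - u'(\psi(z))\bigr)\, \mathrm{d}\mu(z) \\
&= \frac{1}{\rho^{2}\nu(B_\rho(y))} \int_{B_\rho(y)} \bigl(u'(y) - u'(w)\bigr)\, \mathrm{d}\nu(w) = (\Delta_Y^\rho u')(y),
\end{align*}
that is, $\Delta_X^\rho(u' \circ \psi) = (\Delta_Y^\rho u') \circ \psi$. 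The second claim of the proposition is now immediate: if $\Delta_Y^\rho u' = \lambda u'$, then $\Delta_X^\rho(u' \circ \psi) = \lambda (u' \circ \psi)$, so $u' \circ \psi$ is a $\rho$-eigenfunction of $(X,d_X,\mu)$ with the same eigenvalue. For the first claim, the spectral equality follows from invariance of the spectrum under unitary equivalence; alternatively, via the Courant--Fischer--Weyl characterisation, one checks that $T$ sets up a bijection between the $k$-dimensional subspaces of $L^2(Y,\nu)$ and those of $L^2(X,\mu)$, and that both the Dirichlet form $D^{\rho}$ and the weighted norm $\lVert\cdot\rVert_{\cdot^{\rho}}$ are preserved by $T$ (a brief double application of the change of variables, again using $\psi(B_\rho(x)) = B_\rho(\psi(x))$).

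The argument is essentially routine once the geometric identity $\psi(B_\rho(x)) = B_\rho(\psi(x))$ is in place. This is the only step that uses the isometry assumption essentially rather than symbolically; everything else is bookkeeping with the change-of-variables formula induced by $\psi_{\#}\mu = \nu$, so the main obstacle, if any, is simply keeping careful track of which of the three defining properties of an mm-isomorphism is invoked at each step.
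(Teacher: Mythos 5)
Your proposal is correct, and its core computations (change of variables via $\psi_{\#}\mu=\nu$, the identity $\psi(B_\rho(x))=B_\rho(\psi(x))$ and hence $\mu(B_\rho(x))=\nu(B_\rho(\psi(x)))$) are sound. The organising principle differs from the paper's, though. The paper works entirely at the level of the variational characterisation: it shows $D_Y^\rho(u')=D_X^\rho(u'\circ\psi)$ and $\lVert u'\rVert_{Y^\rho}^2=\lVert u'\circ\psi\rVert_{X^\rho}^2$, identifies the $k$-dimensional subspaces $H_Y^k$ with $H_X^k$ via composition with $\psi$, and reads off equality of the Courant--Fischer--Weyl min--max values; the eigenfunction claim is then left as a consequence. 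You instead prove the pointwise operator intertwining $\Delta_X^\rho(u'\circ\psi)=(\Delta_Y^\rho u')\circ\psi$, i.e.\ $\Delta_X^\rho = T\,\Delta_Y^\rho\,T^{-1}$ for the composition operator $T$, and deduce both claims from similarity of the operators. Your route makes the eigenfunction statement immediate rather than implicit, and it makes explicit the ball-measure identity $\mu(B_\rho(x))=\nu(B_\rho(\psi(x)))$, which the paper also needs (for the weighted norm $\lVert\cdot\rVert_{X^\rho}$) but passes over with ``using the same type of development''. The paper's form-level route avoids ever manipulating the normalising factor $1/(\rho^2\mu(B_\rho(x)))$ inside the operator and plugs directly into the min--max definition of $\lambda_k(X,\rho)$ that the paper actually uses; since that definition is the ratio $D_X^\rho/\lVert\cdot\rVert_{X^\rho}^2$ rather than the bare $L^2(\mu)$-spectrum, it is worth noting (as you do in your closing alternative, which is essentially the paper's proof) that $T$ preserves the weighted norm as well as the plain $L^2$ norm, so the two notions of spectrum are matched consistently. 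No gap; the two arguments are complementary.
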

\begin{proof}
From the definition of isomorphism, one has 
\begin{align*}
D_Y^{\rho}(u') &= \frac{1}{2} \iint_{d_Y(x,y)<\rho} (u'(x)-u'(y))^2 \mathrm{d}\nu(x) \mathrm{d}\nu(y)\\
& = \frac{1}{2} \iint_{d_Y( \psi (x), \psi(y))<\rho} (u'(\psi(x))-u'(\psi(y)))^2 \mathrm{d}\mu(x) \mathrm{d}\mu(y)\\
& =\frac{1}{2} \iint_{d_X(x,y)<\rho} (u'(\psi(x))-u'(\psi(y)))^2 \mathrm{d}\mu(x) \mathrm{d}\mu(y)\\
& = D_X^{\rho}(u'\circ \psi).
\end{align*}
\bgroup
Using the same type of development, one has $\lVert u' \rVert_{Y^\rho}^2 = \lVert u' \circ \psi \rVert_{X^\rho}^2$. Further, each element  $u' \in H_Y^k\setminus\{0\}$ can be identified with an element $u' \circ \psi \in H_X^k\setminus\{0\}$. Indeed, one can develop each function $u'$ in its $L^2(Y,\nu)$-basis. From the latter basis, one can construct a basis for $L^2(X,\mu)$ by composition of each basis element with $\psi$, as the orthonormality conditions will trivially hold in $L^2(X,\mu)$ by  the change of variable formula
\[
\int f \diff (\psi_{\#} \mu) = \int (f \circ \psi)  \diff \mu. 
\]
By the same formula, the coefficients of $u'$ and $u' \circ \psi$, in their respective expansion, match. 
This ensures that the eigenvalues obtained from the Courant-Fischer-Weyl formulation are identical for $(Y,d_Y,\nu)$ and $(X,d_X,\mu)$. The claims follow.
\egroup
\end{proof}
\bgroup
\subsection{Euclidean embedding of mm-spaces}
\label{sec: EucEmb}
At this stage, it is illustrative to comment on the embedding of an mm-space indexed by $\rho$.
Importantly, one can see the spectra as a natural quantity arising from the construction of a $\rho$-dependent embedding of the mm-space into $\reals^K$. 
Depending on the application, one will focus more on certain characteristics of the embedding, as one shall see.
This embedding occurs in two stages, first through the construction of the auxiliary graph and then through the embedding of the latter graph into $\reals^K$. 
In this context, we recall that, starting from the Laplacian of the auxiliary graph for a given $\rho$, the points are defined by 
\[
\mathbf{X}_{em} := \operatorname{diag}[(\lambda_2^\rho, \ldots, \lambda_K^\rho)]^{-1/2} U_\rho^\top,
\] 
where $U_\rho$ is the matrix of eigenvectors of the Laplacian of the auxiliary graph.
Therefrom, one easily sees that the inverse eigenvalues provide the variability of the point cloud $\mathbf{X}_{em}$ along its principal directions given by the eigenvectors of the auxiliary graph.

\egroup
\subsection{Comparison with other shape descriptors}
\label{sec: Ex}
In the sequel, we will propose inter alia to rely on a spectrum of the $\rho$-Laplacian to carry out inference on mm-spaces. It is therefore natural to assess the discriminative power of the spectra and compare it with other descriptors of mm-spaces. To do so, in this section, we compute the spectra for various $\rho$ parameters in various examples of mm-spaces that are presented in \cite{memoli2011gromov}. 
\bgroup
We begin with two examples that illustrate the behaviour of the $\rho$-spectra. Then, we exhibit that, contrarily to certain lower bounds for the GW-distance,
the $\rho$-Spectrum can discriminate between the spaces in Examples \ref{ex: Fig7Mem}, \ref{ex: Plan} and \ref{ex: Tree}.
These lower bounds are the distribution of distances (DoD)
\[
t \mapsto \mu \otimes \mu \{ (x,x') : d_X(x,x') \le t \} ,
\] 
and the local distribution of distances, i.e.,
\begin{align} 
\label{eq: locDist}
&h_X : X \times [0,\operatorname{diam}(X)] \to [0,1] \\ \nonumber
&\quad(x,t) \mapsto \mu \left( \{x' : d_X (x,x') \le t \}\right).
\end{align}
\egroup
\begin{ex}
Take two two-point spaces with distance 1 between the two points and masses $\{3/4,1/4\}$ and $\{1/2,1/2\}$, respectively. 
Then, for $\rho>1$, their spectra are $\{0,3/8\}$ and $\{0,1/2\}$, respectively.  

In general, for any two-point space with total mass $\mathfrak{m}$, mass of the first point $a$ and distance between the two points $r$, the $\rho$-spectrum is given by $\{0, 2a (\mathfrak{m} -a)\1_{\{r <\rho\}}\}$. 
\end{ex} 
\begin{ex}
Define $\{\Delta_n\}_{n=2}^\infty$ to be the sequences of mm-spaces such that $\Delta_n$ has $n$ points, the distance between any pair of points of   $\Delta_n$ is one and the mass associated to each vertex is $1/n$. 
Then, the sequence of spectra is $ \{0,  \1_{\{\rho\geq 1\}}\mathbf{1}_{n-1}^\top /n\}$.
\end{ex} 

\begin{ex}
\label{ex: Fig7Mem}
Consider the two mm-spaces in Fig.~\ref{fig: SignatureExample}, top panel. They are not isomorphic but have the same distribution of distances. 
Set\footnote{This is just larger than the smallest distance between two points.} $\rho=1.5$, 
the spectra are $\{0.1875,0.625,0,0\}$ for the right hand side mm-space on the aforementioned graph and $\{0.125,0.125,0,0\}$ for the second one.
For $\rho=2.5$, the spectra are $\{0.1875, 0.1875, 0, 0\}$ and $\{0.2134, 0.1250, 0.0366, 0\}$, respectively. A representation of the $\rho$-spectra is shown on Fig.~\ref{fig: SignatureExample}. One clearly sees that the distances and connections are taken into account. On the left hand-side, the fact that the two largest eigenvalues are identical after the first jump exhibits the fact that they are two similar, disconnected substructures. After the second jump, even though the distances are the same, the geometric structure is a triangle on the right hand side while it is a path on the left hand side. This is also reflected by a difference between the spectra.
\end{ex} 
\begin{ex}
\label{ex: Plan}
 Let $X =(0,1,4,10,12,17)$ and $Y=(0,1,8,11,13,17)$ be two sets of points. Endow them with the Euclidean distance on $\reals$ and a uniform probability measure. They are non-isomorphic but have the same distributions of distances. 
If one sets $\rho=2.5$, both spectra are the same and $\{0.556, 0.556,0,0,0,0\}$. If one chooses $\rho=4.5$, a discrimination becomes possible as the spectra are now given by $\{0.0833, 0.0833, 0.0556, 0, 0, 0\}$ for $X$ and $\{0.0948, 0.0556, 0.0556, 0.0163, 0, 0\}$.
\end{ex}
\begin{ex}
\label{ex: Tree}
We finally consider the two mm-spaces in \cite[Fig.~8]{memoli2011gromov}. In this last example, one obtains the set of eigenvalues 
$\{0.05396, 0.04635, 0.04444, 0.02667, 0.01573, 0.00463, 0, 0, 0\}$ for the space $X$ and $\{0.05234, 0.04915, 0.04361, 0.02613, 0.01592, 0.00462, 0, 0, 0\}$ for $Y$, choosing $\rho=1.5$. The two spectra are close but different. On the other hand, for $\rho=2.5$, the spectra are the same, which is expected from the apparent permutations possible. However these permutations do not give rise to an isometry.  
 \end{ex}
From these examples, the conclusion is that the $\rho$-varying spectra can help discriminate between mm-spaces for which certain lower bounds on the Gromov-Wasserstein distance were not sensitive enough. In this matter, the presence of the $\rho$-parameter plays an important role as was seen on the last example, for instance.

\begin{figure}
\begin{center}

\centering
\begin{tikzpicture}            
    \node[shape=circle,draw=black] (A) at (0,0) {\footnotesize A};
    \node[shape=circle,draw=black] (B) at (3.9,-1.3) {\footnotesize B};
    \node[shape=circle,draw=black] (C) at (5.2,0) {\footnotesize C};
    \node[shape=circle,draw=black] (D) at (3.9,1.3) {\footnotesize D};
   \node at (3.73,-0.65) {\footnotesize 2};

    \path [-] (A) edge node[above] {\footnotesize$4$} (C);
    \path [-](A) edge node[below] {\footnotesize$\sqrt{10}$} (B);
    \path [-](A) edge node[above] {\footnotesize$\sqrt{10}$} (D);
    \path [-](B) edge node[right] {\footnotesize$\sqrt{2}$} (C);
     \path [-](B) edge node[] {} (D);
       \path [-](C) edge node[right] {\footnotesize$\sqrt{2}$} (D);
\end{tikzpicture}
\qquad
\begin{tikzpicture}            
    \node[shape=circle,draw=black] (A) at (0,0) {\footnotesize A'};
    \node[shape=circle,draw=black] (B) at (2.6,0) {\footnotesize B'};
    \node[shape=circle,draw=black] (C) at (3.9,1.3) {\footnotesize C'};
    \node[shape=circle,draw=black] (D) at (-1.3,1.3) {\footnotesize D'};

    \path [-] (A) edge node[above] {\footnotesize$\sqrt{10}$} (C);
    \path [-](A) edge node[below] {\footnotesize$2$} (B);
    \path [-](D) edge node[left] {\footnotesize$\sqrt{2}$} (A);
    \path [-](B) edge node[right] {\footnotesize$\sqrt{2}$} (C);
     \path [-](B) edge node[above] {\footnotesize$\sqrt{10}$} (D);
       \path [-](C) edge node[above] {\footnotesize 4} (D);
\end{tikzpicture}

\begin{tikzpicture}[scale=0.75]

\begin{axis}[
    xlabel={$\rho$},
    ylabel={Spectrum},
    xmin=0, xmax=5,
    ymin=0, ymax=0.3,
    xtick={0,1,2,3,4,5},
    ytick={0,0.1,0.20,0.3},
    legend pos=north west,
]
\addplot[
    color=blue,
    mark=*
        ]
    coordinates {
(0,0)
(1.41,0)
(1.41,0.185)
(2,0.185)
(2,0.185)
(3.162,0.185)
(3.162,0.25)
(4,0.25)
(4,0.25)
(5,0.25)
    };
 \addlegendentry{$\lambda_4$}
\addplot[
    color=red,
    mark=*
        ]
    coordinates {
(0,0)
(1.41,0)
(1.41,0.0625)
(2,0.0625)
(2,0.185)
(3.162,0.185)
(3.162,0.25)
(4,0.25)
(4,0.25)
(5,0.25)
    };
 \addlegendentry{$\lambda_3$}
 \addplot[
    color=gray,
    mark=*
        ]
    coordinates {
(0,0)
(1.41,0)
(1.41,0)
(2,0)
(2,0)
(3.162,0)
(3.162,0.125)
(4,0.125)
(4,0.25)
(5,0.25)
    };
 \addlegendentry{$\lambda_2$}
\end{axis}
\end{tikzpicture}
\begin{tikzpicture}[scale=0.75]
\begin{axis}[
    xlabel={$\rho$},
    ylabel={Spectrum},
    xmin=0, xmax=5,
    ymin=0, ymax=0.3,
    xtick={0,1,2,3,4,5},
    ytick={0,0.1,0.20,0.3},
    legend pos=north west,
]
\addplot[
    color=blue,
    mark=*
        ]
    coordinates {
(0,0)
(1.41,0)
(1.41,0.125)
(2,0.125)
(2,0.2134)
(3.162,0.2134)
(3.162,0.25)
(4,0.25)
(4,0.25)
(5,0.25)
    };
 \addlegendentry{$\lambda_4$}
\addplot[
    color=red,
    mark=*
        ]
    coordinates {
(0,0)
(1.41,0)
(1.41,0.125)
(2,0.125)
(2,0.125)
(3.162,0.125)
(3.162,0.25)
(4,0.25)
(4,0.25)
(5,0.25)
    };
 \addlegendentry{$\lambda_3$}
 \addplot[
    color=gray,
    mark=*
        ]
    coordinates {
(0,0)
(1.41,0)
(1.41,0)
(2,0)
(2,0.0366)
(3.162,0.0366)
(3.162,0.125)
(4,0.125)
(4,0.25)
(5,0.25)
    };
 \addlegendentry{$\lambda_2$}
\end{axis}
\end{tikzpicture}
\caption{\label{fig: SignatureExample} \slshape\small Top panel) The two metric measures spaces in Example~\ref{ex: Fig7Mem}. Bottom panel) The corresponding spectra for the two different models of the top panel as a function of the parameter $\rho$. The eigenvalue $\lambda_1$ is not represented as it is zero for each $\rho$, which is a fundamental property of graph Laplacians. }
\end{center}
\end{figure}
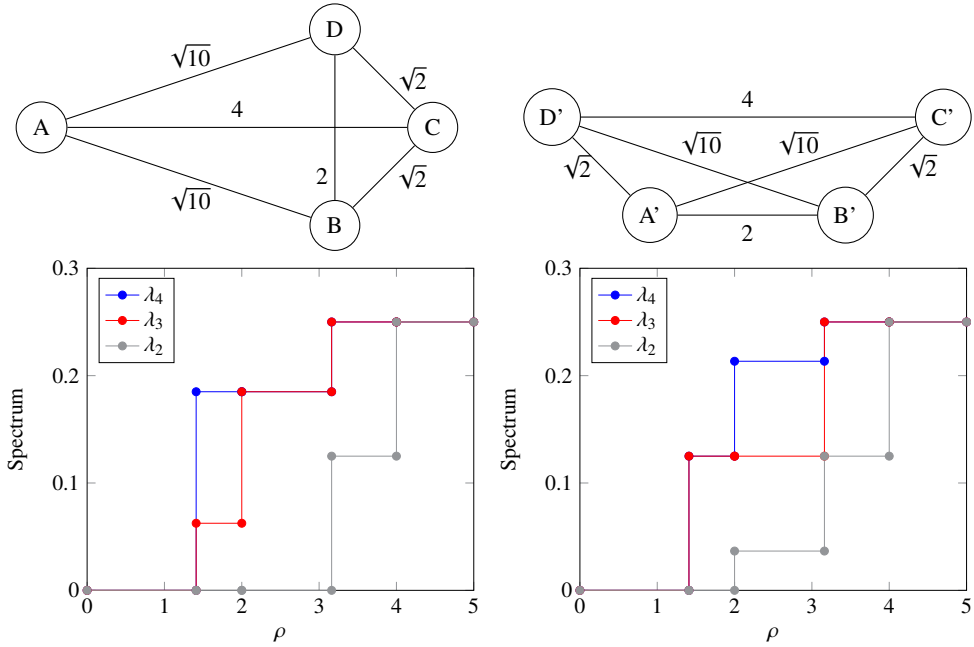

\subsection{Uniform isospectrality}
In this section, we investigate the cases where two discrete metric-measure spaces have the same spectrum uniformly in $\rho$. 
\bgroup
First, we recall that there exist non isometric graphs with the same Laplace spectrum; the two graphs on Fig.~\ref{fig: FigIso} provide an example\footnote{The entries of the adjacency matrix are fixed to a constant when there is an edge and to zero otherwise.}. The latter are sometimes called \textit{isospectral} and the question to understand when isospectrality occurs for non isometric graphs has an already long history. It is not fully solved, albeit some progress has been made, see for instance \cite{qiu2020theorem,van2003graphs, brouwer2011spectra,halbeisen2000reconstruction} and the references therein.
 
\begin{figure}
\centering
\begin{tikzpicture}            
    \node[shape=circle,draw=black, fill=black,inner sep=0pt,minimum size=8pt] (A) at (0,0) {};
      \node[shape=circle,draw=black, fill=black,inner sep=0pt,minimum size=8pt]  (B) at (1,0) {};
        \node[shape=circle,draw=black, fill=black,inner sep=0pt,minimum size=8pt]  (C) at (2,0) {};
        \node[shape=circle,draw=black, fill=black,inner sep=0pt,minimum size=8pt] (D) at (3,0) {};
       \node[shape=circle,draw=black, fill=black,inner sep=0pt,minimum size=8pt]  (E) at (1.5,-1) {};
       \node[shape=circle,draw=black, fill=black,inner sep=0pt,minimum size=8pt]  (F) at (1.5,1) {};

    \path [-] (A) edge node[above] {} (F);
    \path [-](A) edge node[below] {}  (E);
     \path [-](E) edge node[] {} (D);
      \path [-](F) edge node[] {} (D);
        \path [-](F) edge node[] {} (C);
          \path [-](F) edge node[] {} (B);
       \path [-](C) edge node[right] {}  (B);
\end{tikzpicture}
\qquad
\begin{tikzpicture}            
    \node[shape=circle,draw=black, fill=black,inner sep=0pt,minimum size=8pt] (A) at (0,0) {};
      \node[shape=circle,draw=black, fill=black,inner sep=0pt,minimum size=8pt]  (B) at (1,0) {};
        \node[shape=circle,draw=black, fill=black,inner sep=0pt,minimum size=8pt]  (C) at (2,0) {};
        \node[shape=circle,draw=black, fill=black,inner sep=0pt,minimum size=8pt] (D) at (3,0) {};
       \node[shape=circle,draw=black, fill=black,inner sep=0pt,minimum size=8pt]  (E) at (1.5,-1) {};
       \node[shape=circle,draw=black, fill=black,inner sep=0pt,minimum size=8pt]  (F) at (1.5,1) {};

    \path [-] (A) edge node[above] {} (F);
    \path [-](A) edge node[below] {}  (E);
     \path [-](E) edge node[] {} (D);
      \path [-](F) edge node[] {} (D);
       
        \path [-](B) edge node[] {} (E);
          \path [-](F) edge node[] {} (B);
       \path [-](C) edge node[right] {}  (B);
\end{tikzpicture}
\caption{ \slshape\small Two isospectral graphs.}
\label{fig: FigIso}
\end{figure}
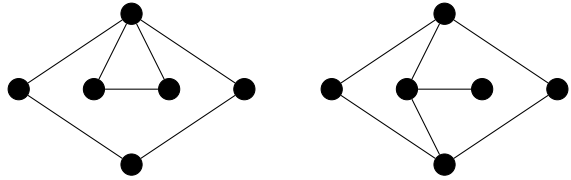
The case that interests us contrasts with the literature. Instead of a single graph, we are considering a $\rho$-indexed family of graphs. Completely characterizing isospectrality uniformly in $\rho$ is certainly beyond the horizon. Still, we now aim at understanding what information can be retrieved from the knowledge of the fact that $\forall \rho, \Lambda(L_1(\rho))=\Lambda(L_2(\rho))$.
\egroup

\begin{proposition}
If two finite mm-spaces with edge sets $E_1$, $E_2$ and weight matrices $W_1(\rho)$, $W_2(\rho)$ respectively are isospectral uniformly in $\rho$, then the two sets of distances are the same. 
Further, the weights need to be such that 
\[
 \sum_{e\in E_1} W_{1,e}(\rho)    =  \sum_{e\in E_2} W_{2,e}(\rho), \forall \rho.
\]
\end{proposition}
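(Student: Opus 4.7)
The whole argument rests on a single classical identity for the graph Laplacian of the auxiliary graph: writing $L(\rho) = \operatorname{Deg}(\rho) - W(\rho)$, one has
\[
\sum_{k=1}^{K} \lambda_k(L(\rho)) \;=\; \tr L(\rho) \;=\; \tr \operatorname{Deg}(\rho) \;=\; 2\sum_{e\in E} W_e(\rho),
\]
the last equality because every edge weight $W_{j,\ell}(\rho)$ appears once in $\operatorname{Deg}_{j,j}$ and once in $\operatorname{Deg}_{\ell,\ell}$. Since $\Lambda(L_1(\rho)) = \Lambda(L_2(\rho))$ for all $\rho$, the left-hand side is the same for both spaces, which gives $\sum_{e\in E_1} W_{1,e}(\rho) = \sum_{e\in E_2} W_{2,e}(\rho)$ for every $\rho$. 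This already proves the second assertion of the proposition.

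For the first assertion, the plan is to read the set of distances off the discontinuities of the above step function. Recall that $W_{j,\ell}(\rho) = \mu_j\mu_\ell\,\ind_{\{d(x_j,x_\ell)<\rho\}}$, so the map
\[
\rho \;\longmapsto\; S_i(\rho) \;:=\; \sum_{e\in E_i} W_{i,e}(\rho) \;=\; \sum_{j<\ell} \mu_{i,j}\,\mu_{i,\ell}\,\ind_{\{d_i(x_j,x_\ell)<\rho\}}
\]
is a non-decreasing step function whose jump at a value $\rho=r$ equals $\sum_{d_i(x_j,x_\ell)=r} \mu_{i,j}\mu_{i,\ell}$. Because all masses are strictly positive, this jump is strictly positive \emph{if and only if} $r$ is realised as a pairwise distance in the $i$-th mm-space. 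Hence the discontinuity set of $S_i$ is exactly the set of distances of the $i$-th mm-space. From the first paragraph $S_1 \equiv S_2$, so the two discontinuity sets coincide, and the two sets of distances are equal.

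The argument is essentially two lines, so there is no substantial obstacle; the only point requiring mild care is that several pairs of points may share the same distance, but this merely contributes a single jump of larger amplitude at that value of $\rho$, which does not affect the characterisation of the discontinuity set as the set of distinct pairwise distances. Note that as a bonus the same reasoning also shows that the \emph{jump sizes} of $S_1$ and $S_2$ must match, so the mass products $\sum_{d(x_j,x_\ell)=r}\mu_j\mu_\ell$ agree at every distance $r$; this is a strictly stronger conclusion than the two claims of the proposition but is obtained at no extra cost.
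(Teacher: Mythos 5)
Your proposal is correct and follows essentially the same route as the paper: the paper also derives $\sum_{e\in E_1} W_{1,e}(\rho)=\sum_{e\in E_2} W_{2,e}(\rho)$ from the trace of the Laplacian (via the edge decomposition $L=\sum_e W_e b_e b_e^\top$ with $\trace(b_eb_e^\top)=2$ rather than via $\tr \operatorname{Deg}$, a purely cosmetic difference) and then reads off the distance sets from the jump locations of this monotone step function. Your closing remark that the jump \emph{sizes} must also agree matches the paper's final observation about the matching mass products, and your phrasing as a sum over all pairs realising a given distance is in fact slightly more careful.
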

\begin{proof}
First, we recall the fact that two matrices $A,B$ of size $n\times n$ have the same spectrum if and only if $\trace(A^i) =\trace(B^i)$ for $i \in \llbracket n \rrbracket$, see Lemma1 in \cite{van2003graphs}. 
Recalling the formula for Laplacians in \eqref{eq: ConvProp}, we thus have that $\Lambda(L_1^\rho)=\Lambda(L_2^\rho), \forall \rho,$ implies
\begin{align*}
\forall \rho, \trace(L_1^\rho) =\trace(L_2^\rho) 
&\Rightarrow \trace \left( \sum_{e\in E_1} W_{1,e}(\rho) b_e b_e^\top  \right) = \trace \left( \sum_{e\in E_2} W_{2,e}(\rho) b_e b_e^\top  \right), \forall \rho \\
&\Rightarrow  \sum_{e\in E_1} W_{1,e}(\rho) \trace  \left(b_e b_e^\top  \right) =  \sum_{e\in E_2} W_{2,e}(\rho) \trace  \left(b_e b_e^\top  \right), \forall \rho \\
&\Rightarrow  \sum_{e\in E_1} W_{1,e}(\rho)    =  \sum_{e\in E_2} W_{2,e}(\rho), \forall \rho,
\end{align*}
where the last implication comes from the fact that $b_e = \e_{in(e)}-\e_{out(e)}$, which implies that $\trace  \left(b_e b_e^\top  \right)=2$ for each $e$ as
$\diag(b_e b_e^\top) =\e_{in(e)}+\e_{out(e)} $.
Notice that $\sum_{e\in E_1} W_{1,e}(\rho)$ is increasing in $\rho$ so that uniform isospectrality requires that the sets of observed distances for both spaces be the same. Indeed, if it weren't the case, there would exist a certain $\rho_0$ and $\varepsilon>0$, such that $\sum_{e\in E_1} W_{1,e}(\rho_0 + \varepsilon)- \sum_{e\in E_1} W_{1,e}(\rho_0 )$ is strictly positive while $\sum_{e\in E_2} W_{2,e}(\rho_0 + \varepsilon)- \sum_{e\in E_2} W_{2,e}(\rho_0)$ is null. Further, the increase of both spectra must be of an amount  $\mu_{1, out(e)}  \mu_{1, in(e)} = \mu_{2, out(e)}  \mu_{2, in(e)}$.
\end{proof}

\begin{proposition}
\label{sec: spanTrees}
Consider two finite, $\rho$-uniformly isospectral mm-spaces with the uniform measure. Assume that $\rho_0$ is such that for all $\rho> \rho_0$, the auxiliary graph to which the $\rho$-Laplacian corresponds is connected. 
Then, for each $\rho> \rho_0$, the number of spanning trees of the auxiliary graphs $G_1^\rho$ and $G_2^\rho$ must be equal.
\end{proposition}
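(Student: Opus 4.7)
The plan is to apply the weighted Matrix--Tree Theorem (Kirchhoff's theorem) to each auxiliary graph and exploit the fact that, under the uniform measure assumption, all the nonzero edge weights within a single graph at a fixed $\rho$ are identical.

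First, I would observe that since the two spectra have the same cardinality (uniformly in $\rho$), the two mm-spaces have the same number of points, say $K$, and each carries the uniform measure, so every node has mass $1/K$. Consequently, for a fixed $\rho$ the nonzero weights $W_{j,\ell}(\rho) = \mu_j \mu_\ell \mathbf{1}_{\{d(x_j,x_\ell)<\rho\}}$ on the auxiliary graph all equal $1/K^2$ when the corresponding edge is present.

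Next, I would recall Kirchhoff's weighted Matrix--Tree Theorem: for a connected weighted graph on $K$ vertices with Laplacian $L$ and eigenvalues $0 = \lambda_1 < \lambda_2 \leq \cdots \leq \lambda_K$, the \emph{weighted} spanning tree count satisfies
\[
\sum_{T \in \mathcal{T}(G)} \prod_{e \in T} w_e \;=\; \frac{1}{K} \prod_{i=2}^K \lambda_i,
\]
where $\mathcal{T}(G)$ denotes the set of spanning trees of $G$. Since both $G_1^\rho$ and $G_2^\rho$ are connected (by hypothesis, for $\rho>\rho_0$) and share the same Laplacian spectrum, the right-hand sides are equal for both graphs.

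Now, every spanning tree of a graph on $K$ vertices contains exactly $K-1$ edges, and in our setting each such edge carries the common weight $1/K^2$. Hence the weighted count factorises as $|\mathcal{T}(G_i^\rho)| \cdot (1/K^2)^{K-1}$, and equating the two sides yields $|\mathcal{T}(G_1^\rho)| = |\mathcal{T}(G_2^\rho)|$, as desired. The only delicate point is that the uniform-weight assumption is genuinely needed to separate the combinatorial cardinality from the algebraic weighted sum; without it one would only obtain equality of the weighted tree polynomials. Beyond this, the argument reduces to one invocation of a classical theorem, so no substantial obstacle remains.
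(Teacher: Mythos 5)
Your proposal is correct and follows essentially the same route as the paper: both invoke the Matrix--Tree theorem and use the fact that under the uniform measure all edge weights at a fixed $\rho$ are the common constant $1/K^2$, so the weighted tree sum factorises as $|\mathcal{T}(G)|\,(1/K^2)^{K-1}$ and isospectrality forces equality of the combinatorial counts. Your version is slightly more explicit (stating the weighted Kirchhoff identity and noting that equal spectral cardinality pins down a common $K$), but the underlying argument is the one the paper gives.
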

\begin{proof}
From Proposition 1.3.4 in \cite{brouwer2011spectra}, the number of spanning trees of a graph $G$, $\#\operatorname{ST}(G)$, is closely related to the spectrum of the graph when the latter is unweighted. Indeed, assuming that the graph Laplacian has $K-1$ nonzero eigenvalues, it holds 
\[
\#\operatorname{ST}(G)= \frac{1}{K} \prod_{j=2}^K  \lambda_j.
\]
As considering a uniformly weighted graph is equivalent up to constant as considering unweighted graphs, uniform isospectrality for mm-spaces with the uniform measure implies that the number of spanning trees behaves exactly in the same way for the two graphs as a function of the location parameter $\rho$ as soon as they are both connected. \end{proof}
One can also check that the local distribution of distances, Eq.~\eqref{eq: locDist},
is encoded in the Laplacian. Therefore, uniform isospectrality entails that the local distribution of distances can not be too different. %
The $(i,i)$-th element on the diagonal of the  $\rho$-Laplacian can be rewritten as a function of the local distribution of distances, 
\[
\sum_{j=1; j\neq l}^K \mu_j\mu_\ell \1_{\{d(x_j,x_\ell)\le\rho\}} = \mu_j (h_X(x_j, \rho)-\mu_j),
\]
\bgroup
assuming for simplicity that that no pair of points is at a distance exactly equal to $\rho$.
Further, from Lemma~\ref{lem: largestEig} and Lemma~\ref{lem: BoundEigGraph} \textemdash which will be introduced later on, 
we obtain that
\begin{equation}
\label{eq: BndsLocDist}
 \max_{j \in \llbracket K\rrbracket} \mu_j (h_X(x_j, \rho)-\mu_j) \le \lambda_{\max}(L^\rho) \le  2\max_{j \in \llbracket K\rrbracket} \mu_j (h_X(x_j, \rho)-\mu_j).
\end{equation}
The largest eigenvalue must thus be driven by the distribution of distances.
Therefore, from the bounds in \eqref{eq: BndsLocDist}, isospectrality entails necessary conditions on the local distribution of distances because $\lambda_{\max}(L_1^\rho)$ must be equal to $\lambda_{\max}(L_2^\rho)$.
\egroup

Let us now try to reconstruct the graphs from the spectrum in a forward manner to provide a better understanding of the information contained in the sequence of spectra.
To this end, to simplify the process, we consider a discrete mm space with uniform measure. We also make the assumption that all the distances are different. 
Looking at the spectra for increasing $\rho_1<\rho_2<\ldots $, the spectrum will show that there exist a pair of nodes that are distant from each other by a certain $\rho_1$. 
Then, two possibility arise. Either the next edge is connected to the existing structure or it is not. In the latter case, one would only have an additional eigenvalue equal to $2/K^2$.
Further, we also can ensure that the two endpoints of the added node must be outside the union of two circles, each centred on the endpoint of the first edge, of radius $\rho_2$.
In the other case, as the first non-zero eigenvalue changed, we know that the additional edge must be connected to the first structure. 

Let us now consider, still in the case of uniform weights, that we start from a configuration where the next additional edge will be linked to a two-edge structure. 
Then, we will be able to distinguish whether it is connected to the central point or to one of the endpoints. 
Indeed, in the first case, the Laplacian matrix is given by a multiple of 
\[
\begin{bmatrix}
1 & -1 & 0 & 0 \\
-1 &2  & -1 & 0 \\
0& -1 & 2 & -1 \\
0 & 0 & -1 & 1 \\
\end{bmatrix}
\] 
whose eigenvalues are well known to be $\{2 +\sqrt{2},2,2 -\sqrt{2},0\}$, 
while in the second case, the matrix is 
\[
\begin{bmatrix}
1 & -1 & 0 & 0 \\
-1 &3  & -1 & -1 \\
0& -1 & 1 & 0 \\
0 & -1 & 0 & 1 \\
\end{bmatrix}
\] 
and its eigenvalues are $\{4,1,1,0\}$. 
In these first, small-structure examples, one can derive from the spectrum information about where the nodes of the mm-space can and cannot be. Further, the the information that comes for the subsequent $\rho$'s in the sequence will also give additional structural information. One should also not forget that, as the radii under consideration increase, some position of nodes might become impossible due to the presence of other connected components in the sequence of graphs.
When the number of points increases, the number of cases to consider increases as well and any attempt of reconstruction rapidly becomes intractable. This fact is also true for metric measure spaces with non-uniform measure.

In a backward manner, when starting from a situation where the spectrum has only one null eigenvalue, the fact that removing an edge produces a spectrum with an additional null eigenvalue implies that this edge was the last  link between two components.

\section{ $\rho$-Laplacian for data analysis}
\label{sec: Num}

\subsection{Computational issues}
\label{sec: CompIss}
We begin with a discussion of the computational complexity and algorithms to compute the $\rho$-Laplacian for a given dataset.
Given an observed mm-space with $K$ nodes, the first step required is to sort the upper triangle of the matrix of distances, which induces a cost of $\Oh(K^2 \log(K))$.
To retrieve the spectrum for a given $\rho$, a computational cost of $\Oh(K^\omega)$ arises, where $\omega$ is the computational complexity of matrix multiplication\footnote{Even though the theoretical complexity of matrix multiplication is such that $\omega \approx 2.37$, the complexity that one can achieve in practice is of the order $\Oh(K^{\log_2 7})$, $\log_2 7\approx2.807$, which is the complexity of Strassen's algorithm \cite{strassen1969gaussian}.}, see \cite{demmel2007fast}.  
Computing the spectrum for each $\rho$ thus requires a number of operations which is $\Oh(K^{4.807})$ but would be $\Oh(K^5)$ with a classical implementation of matrix products. As the matrices for small $\rho$ are sparse, a sparse implementation can deliver a better run time in these cases.
Finally, remark that the problem of computing the spectrum for each $\rho$ can be recast as a series of rank-one updates\textemdash see Eq.~\eqref{eq: ConvProp}. Adopting this point of view will provide a better computational complexity, when one uses first-order approximative updates as in \cite[Section~3]{cardot2018online}. A complexity of  $\Oh(K^{4})$ can thus be achieved but at the price of propagating errors, which are difficult to control. 

\subsection{Shape characterisation through the spectrum}

The first natural way of using the $\rho$-Laplacian for practical purposes is to consider the $\rho$-Spectrum as a signature of the metric measure space under consideration. This is analogous to the celebrated ``ShapeDNA'' in shape analysis \cite{reuter2006laplace}. To carry out statistical inference, we will consider the mean of these signatures in Section~\ref{sec: Inf}.

\begin{rmk}
It turns out that the form of the ``mean of signatures'' that we propose is reminiscent of mean persistence landscapes used in \textit{Topological Data Analysis} \cite[Section~4.4 for instance]{medina2016statistical}. Similarly to persistence diagrams, the spectra encode (at least) the number of connected components. The parallel, however, stops rapidly as the concepts apply to different types of data: we are here concerned with metric measure spaces, while the measure part is absent from topological data analysis. 
\end{rmk}

\begin{rmk}
It is not required that two mm-spaces have the same total mass to compare their spectrum. The very same quantity can thus be used in both \textit{balanced} and \textit{unbalanced} contexts, using the terminology meanwhile common in optimal transport-based analysis (see e.g., \cite{chizat2015unbalanced}).
\end{rmk}

\subsubsection{First examples}
\label{sec: firstExamples}
In this first example, we used a dataset available in the package \textsf{gwDist} \cite{gwDist} presenting discrete mm-spaces of size 50 for various animal species. Starting from the triangulated fine meshes available online \cite{sumner2015mesh}, Hendrikson \cite{gwDist} obtained each mm-space by coarsening the latter meshes, following the procedure in \cite[Section~8.2]{memoli2011gromov}. From the meshes, a set of representative points or seeds is obtained and  the measure associated to a seed arises from the proportion of the mesh points closer to that seed than to any other seed. There were 10 such spaces for each type of animal. The two species (camels and lions) represented were chosen arbitrarily before seeing the results. The $\rho$-parameter was set to 0.34, the median of the distances in the distance matrix of the first Camel mm dataset. The distances are always between zero and one.
The spectral cumulative distribution functions, $\sum_{j\in \llbracket K \rrbracket} \delta_{\lambda_j}/K$, for each dataset is represented in Fig.~\ref{fig: Toy}. The lions are in green while the camels are in black. We will come back to this dataset in Section~\ref{sec: exInference}.
\begin{figure}
\label{fig: Toy}
\begin{center}
\includegraphics[width=0.55\textwidth]{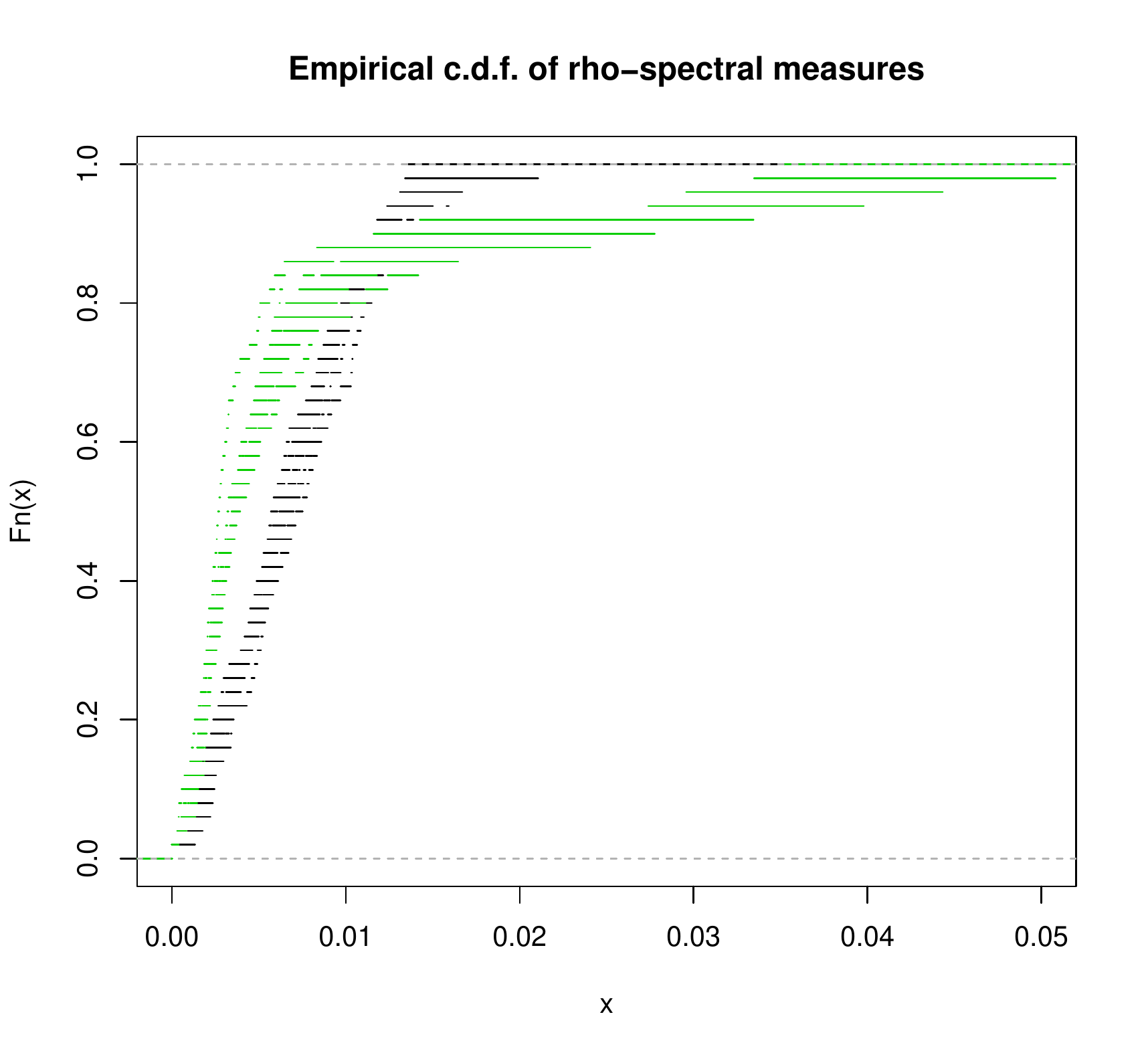}
\end{center}
\caption{Empirical c.d.f.\ of the $\rho$-spectrum of mm-spaces that encode the shape of animals. The lions are in green, while the camels are in black. The $\rho$-parameter is fixed at the median of the distances of the camel datasets. Ten replications per species are represented.  }
\end{figure}
\subsubsection{MDS on spectral distances and clustering}
In this example, we considered datasets pertaining to substructure of proteins. The dataset were taken from the CATH Database (\cite{sillitoe2021cath}) and accessed to via the function \textsf{read.pdb} of the \textsf{R}-package \textsf{bio3d}.
The different replicates were generated via random sampling of 50 elements of the $C^\alpha$ chains. The three structures belong to the Rossmann fold in the CATH hierarchy. The two first, 3V48 and 1UXO belong to the alpha beta hydrolase category, while the third, 1A2O, is a response regulator. 
For two mm-spaces with spectra $\big(\Lambda_{1,j}^\rho\big)_{j \in \llbracket K\rrbracket}, \big(\Lambda_{2,j}^\rho\big)_{j \in \llbracket K\rrbracket}$, the distance considered in this case is 
\begin{equation}
\label{eq: distLapl}
\sup_\rho \left\lVert \Lambda_{1}^\rho - \Lambda_{2}^\rho   \right \rVert_2,
\end{equation}
where $K$ is equal to 50 here and \textcolor{black}{the supremum was replaced by the maximum over a grid of size 200. To construct this grid, we first extract the lower triangle of the distance matrices and combine them into a vector. We then pick the quantiles of level $l/199, 0\le l \le 199$ of that vector to constitute the grid on which the maximum is computed.
We consider 480 replications of the sampling of 50 elements of the $C^\alpha$ chains, 120 of each type of protein. We compute the distance between each pair of samples using \eqref{eq: distLapl} and obtain a distance matrix, which we visualise in Fig.~\ref{fig: Clust} using an MDS projection. It is clear that the distance between signatures enables to distinguish between the different types of structures.} %
 
\input{scatter.tex}

\subsection{Harmonics and principal directions of mm-spaces}
\label{sec: Harmonics}
From the Dirichlet form defined in Eq.~\eqref{eq: DirichletForm}, the eigenvectors can be recovered along the eigenvalues. As they correspond to minimisers of the Dirichlet function, we call them harmonic functions.
In the case where the measure is uniform, the latter will correspond to the classical Laplacian eigenvectors for a graph whose adjacency matrix $W$ is defined by 
$
W_{j,\ell} = \ind_{\{d_{j,\ell} < \rho\}}.
$
As mentioned in Section~\ref{sec: ClasGraphLapl}, the eigenvector corresponding to the second smallest eigenvalue captures the main direction of the shape.

Let us provide a concrete example where the representation turned out useful. Motivated by a biological study pertaining to cellular dynamics, the data\footnote{Data were acquired in the biophysics lab of Sarah Köster lab, Department of Physics, University of G\"ottingen.} are fluorescence microscopy recordings of the microtubuli structure of NIH3T3 cells. Here, microtubules, which constitute important part of the cell skeleton have been recorded over time in living cells. Based on these, a graph structure is obtained by skeletonisation. The nodes of the discrete mm-space are then the nodes of the graph, the distance is the shortest-path distance on the graph and the measure of node $x_j$ is proportional to the sum of distance over all neighbours of $x_j$, i.e., $\mu_j\propto\sum_{\ell \sim j} d_{j,\ell}$.
On Fig.~\ref{fig: Harmonics}, the extracted graph structure of the microtubuli is represented (bottom). On the same figure, the top row exhibits the 5\% of the node points closest to where the eigenvector change its sign for the smallest $\rho$ such that the underlying auxiliary graphs are connected. Recall that the eigenvectors are defined only up to their sign, which explains the choice of representing that particular region instead of colouring the nodes according to the values. We clearly can identify on the left-hand side a small cluster which is only connected to the rest of the structure via a single edge that is relatively long, this isolates all the elements of the cluster as the distance is the shortest-path distance. On the graph obtained from the image 15s later, this area has a different structure and is less peculiar.  %

\begin{figure}[h!]
\begin{center}
\begin{tabular}{@{}c@{}c}
\includegraphics[width=0.445\textwidth, height=67mm]{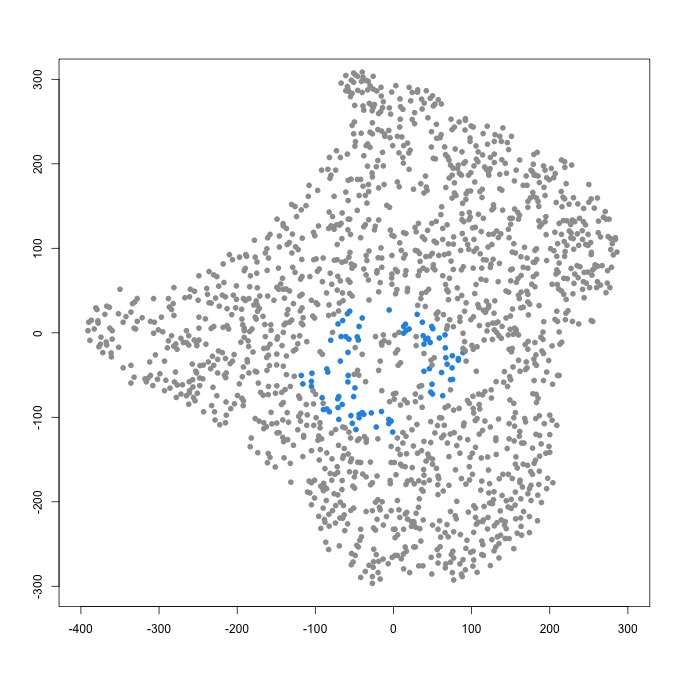}&
\includegraphics[width=0.445\textwidth, height=67mm]{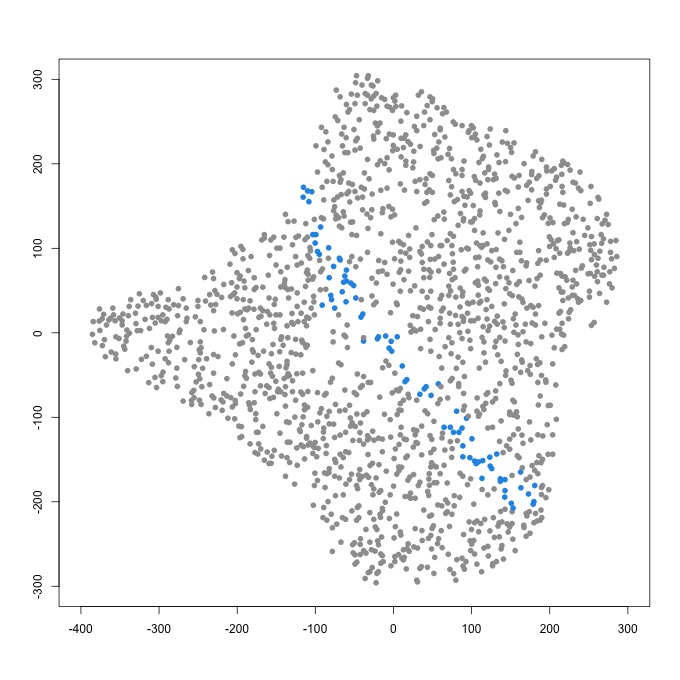}\\
\includegraphics[width=0.49\textwidth, height=70mm]{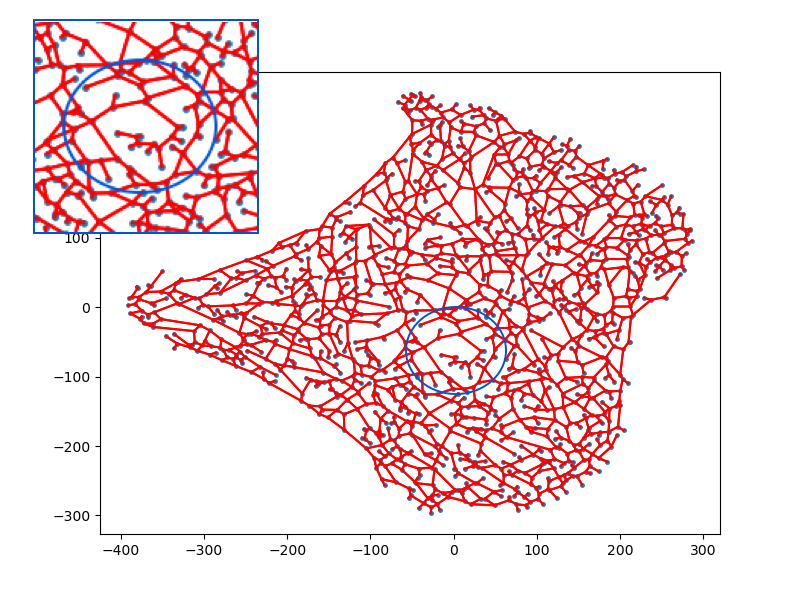}&
\includegraphics[width=0.49\textwidth, height=70mm]{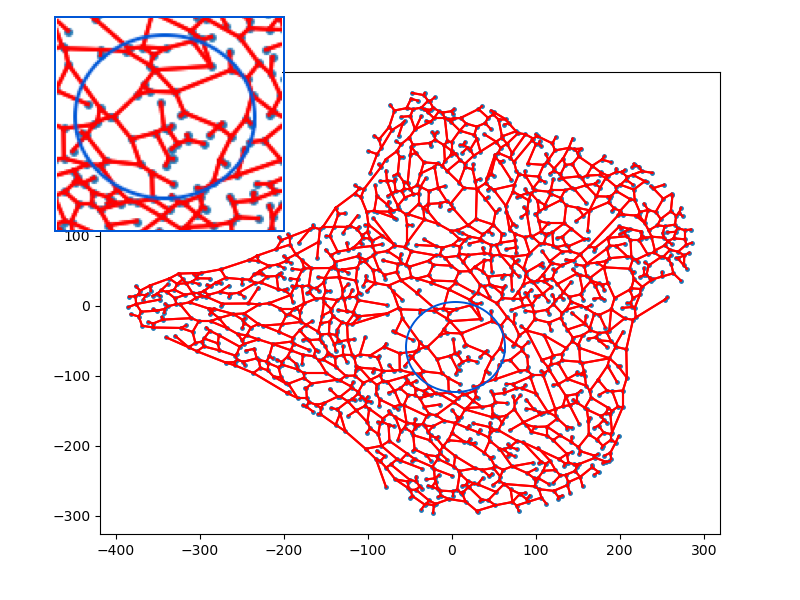}
\end{tabular}
\end{center}
\caption{\label{fig: Harmonics} \slshape\small 
\textcolor{black}{
 Comparison of temporal evolution microtubuli structures of a cell at two different time points. The right column has been recorded $\sim15s$ after the one on the left-hand side.
Top) Coordinates of the nodes of the mm-spaces extracted from the cell images.   The 5\% of the points closest to the region where the first harmonic changes its sign are coloured in blue.  Bottom) Graphs extracted from the images at these two different time points. The blue circle on both pictures reproduces the area where the first harmonic changes its sign at the first time point, as in the top-right image. A zoom in of this region is added in the upper left corners to facilitate the visual comparison. The small cluster in the left zoomed area is sufficiently large and poorly connected to the rest to drive the first harmonic. The situation is quite different on the right-hand side as the region where the first harmonic changes it sign is quite evenly splitting the shape into two parts (see the top right image) and the structure in the zoomed region is not singled out by the first harmonic.  }}
\end{figure}

\subsection{Semi-supervised learning}
\label{sec: SSL}
\bgroup

In semi-supervised learning,  the idea is to use the geometric structure included in the labelled data to infer the label information on unlabelled data. Given a set of points $X_1,\ldots,X_n$ \cite{bousquet2004measure} proposed to use the U-statistic 
 \[
U_f := \frac{1}{2n(n-1)} \sum_{i,i'} (f(X_i)- f(X_{i'}))^2 K(\lVert X_i- X_{i'} \rVert/t )
\]
 as a regulariser for semi-supervised learning problems.
 Interestingly, choosing $K(x)=\1[x < 1 ]$  and a uniform measure, this U-statistic is equivalent (up to a scaling factor) to the Dirichlet form in Eq.~\eqref{eq: DirichletForm}.  This exhibits a strong link between the Dirichlet form that we consider and regularisation in semi-supervised learning problems.

One can thus build upon this link to extend semi-supervised learning to data on mm-spaces, enabling to incorporate importance weight information.  Consider a discrete metric-measure space of $K$ points for which there exists a labelling only for a subset $S$ of $\llbracket  K\rrbracket$.  The latter set must be thought as having a small cardinality when compared to $K$. Define the labels to belong to the set $\{+1,-1\}$. The known labels are called the $Y_i$'s.
It was proposed in \cite{bousquet2004measure} to obtain the label predictions by solving 
\[
\argmin_{f \in \reals^K} \sum_{j=1}^n (Y_j - f_j)^2 + \tau  \sum_{j,\ell} W_{j,\ell} \left( \frac{f_j}{\sqrt{D_j}} -\frac{f_\ell}{\sqrt{D_\ell}}\right)^2,
\]
where $Y_j$ is set to zero whenever $j \notin S$ and $\tau>0$ is a tuning parameter.

Using the graph obtained from the Dirichlet form of the $\rho$-Laplacian, this can be generalised in our setting to the situation where 
the mass (or importance weight) of each node is now allowed to be nonconstant and is used in to predict the labels, viz. 
\[
\argmin_{f \in \reals^K} \sum_{j=1}^n (Y_j - f_j)^2 + \tau  \sum_{j,\ell} \mu_{j}\mu_{\ell} \ind_{\{d_e < \rho\}} \left( \frac{f_j}{\sqrt{D_j}} -\frac{f_\ell}{\sqrt{D_\ell}}\right)^2, \quad \tau >0.
\]

\egroup

\newcommand{\DD}{\Delta\!\!\!\!\!\Delta}
\section{Statistical inference on mean spectra}
\label{sec: Inf}

\subsection{Sampling model and $\rho$-spectral estimator}
Given a random sample of mm-spaces of sample size $n$ \textemdash think of a collection of graphs representations as in Fig.~\ref{fig: Harmonics}, one can compute the sample of eigenvalues $\{\Lambda_i^\rho\}_{i=1}^n$. Let us first explain how we define a random sample of mm-spaces. Consider Sturm's space $(\mathbb{X}, \DD)$, the metric space of all metric measure spaces up to isomorphism \cite{sturm2012space}, and $\mathcal{P}(\mathbb{X})$ the set of all Borel probability measures on $\mathbb{X}$. For a fixed $\prob \in \mathcal{P}(\mathbb{X})$, we consider our dataset to be an i.i.d.\ sample of size $n$ generated by $\prob$. Such a setting has also been used to define and study barycenters of mm-spaces; see \cite{gouic2019fast} for example.
Further, each random, finite mm-space is a random variable taking its value in $S^K_+ \times \reals^K$. It is thus a distance matrix along with a weight vector. Denote by $F_{d_{j\ell}\vert \mu}$ the cumulative distribution function of the distance between the points $X_j$ and $X_\ell$ conditionally on the the weight vector, $\mu$.

Based on the empirical probability measure $\prob_n^\rho$ from the sample $\{\Lambda_i^\rho\}_{i=1}^n$, one defines an estimator of the true spectrum $\lambda^{\rho}$ as 
\begin{equation}
\label{eq: LamEstForm}
(\widehat{\lambda^{\rho}})_k = \frac{1}{n}\sum_{i=1}^n \Lambda_{i,k}^\rho, \quad k \in \llbracket K\rrbracket. 
\end{equation}
One can then compare two such means to infer on differences between mm-spaces. This can be based on asymptotic considerations $(n\to \infty)$ as well as on finite sample concentration bounds, which will be detailed  in the following section.

\subsection{Asymptotic distribution result and finite sample result.}
Before stating the assumption and proving the main result of this section, we recall an important lemma that we will use in the sequel. It is  
 a direct consequence of Gershgorin's circle theorem (\cite{gershgorin1931uber}). %

\begin{lemma}
\label{lem: BoundEigGraph}
The largest eigenvalue of a weighted graph's Laplacian with $K$ nodes, $\lambda_K$, is such that 
\[
\lambda_K \le  2\max_{j} \operatorname{Deg}_{j,j}.
\]
\end{lemma}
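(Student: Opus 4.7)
The plan is to apply Gershgorin's circle theorem directly to the Laplacian matrix $L = \operatorname{Deg} - W$. I would first record the entries of $L$: on the diagonal, $L_{jj} = \operatorname{Deg}_{j,j} = \sum_{\ell \neq j} W_{j,\ell}$, while off the diagonal, $L_{j\ell} = -W_{j,\ell} \le 0$. In particular, for each row $j$ one has
\[
\sum_{\ell \neq j} |L_{j\ell}| = \sum_{\ell \neq j} W_{j,\ell} = \operatorname{Deg}_{j,j}.
\]

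Next I would invoke Gershgorin's theorem: every eigenvalue $\lambda$ of $L$ lies in some disc $\{z \in \mathbb{C} : |z - L_{jj}| \le \sum_{\ell \neq j} |L_{j\ell}|\}$. Combining with the computation above, for some index $j^\star$ (depending on the eigenvalue),
\[
|\lambda - \operatorname{Deg}_{j^\star,j^\star}| \le \operatorname{Deg}_{j^\star,j^\star},
\]
which, because $\operatorname{Deg}_{j,j} \ge 0$, yields $\lambda \le 2\operatorname{Deg}_{j^\star,j^\star} \le 2 \max_j \operatorname{Deg}_{j,j}$. Taking this bound for $\lambda = \lambda_K$ gives the claim.

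There is essentially no obstacle here; the only minor subtlety is noting that the off-diagonal entries of $L$ are the negatives of the weights, so the absolute values recover the $W_{j,\ell}$'s and the Gershgorin radius matches $\operatorname{Deg}_{j,j}$ exactly. Everything else is a one-line consequence of the triangle inequality.
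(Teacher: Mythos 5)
Your proof is correct and follows exactly the route the paper indicates: the paper states this lemma as ``a direct consequence of Gershgorin's circle theorem,'' and your write-up simply fills in the (correct) details of the diagonal entries, the Gershgorin radii, and the resulting bound $\lambda \le 2\operatorname{Deg}_{j^\star,j^\star}$. Nothing further is needed.
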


In order to state our main theorem, we require the space of functions 
\[
\ell_d^\infty([0,D]) := \left \{ z : [0,D] \to \reals^d\ \Big\vert \sup_t\ \lVert z(t) \rVert_\infty < \infty \right \},
\]
which is the product space of $\ell^\infty([0,D])$. 

\begin{theorem} 
\label{thm: LimThm}
Consider the estimator proposed in Eq.~\eqref{eq: LamEstForm}. Assume that the total mass of each metric measure space is bounded above and that all the distances are smaller than $D$. Then, for any $\rho \in [0,D]$,
\[
\sqrt{n} \left( \widehat{\lambda^{\rho}} - \lambda^{\rho}\right) \dto \Gproc(\rho),
\]
where  $\Gproc(\rho)$ is a centred Gaussian process depending solely on $\rho$ and where weak convergence is w.r.t.\ $\ell_d^\infty([0,D])$.
\end{theorem}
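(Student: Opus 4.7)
The plan is to view the estimator \eqref{eq: LamEstForm} as a $K$-variate empirical process indexed by $\rho\in[0,D]$ and invoke a bracketing central-limit theorem. Two structural facts drive the argument. First, by Lemma~\ref{lem: BoundEigGraph} together with $\operatorname{Deg}_{j,j}^\rho=\mu_j\sum_{\ell:d_{j\ell}<\rho}\mu_\ell\le M^2$, where $M$ is the assumed upper bound on the total mass, for every realisation of an mm-space and every $\rho$,
\[
0\le \Lambda_k^\rho \le 2M^2, \qquad k\in\llbracket K\rrbracket,
\]
so the constant $2M^2$ serves as a square-integrable envelope. Second, representing $L^\rho$ as the positive-semidefinite sum \eqref{eq: ConvProp} and applying Weyl's inequality gives $\Lambda_k^\rho\le\Lambda_k^{\rho'}$ whenever $\rho\le\rho'$; each realisation-wise path $\rho\mapsto\Lambda_k^\rho$ is thus non-decreasing, left-continuous, piecewise constant and bounded by $2M^2$.

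With this monotone bounded structure I would control the bracketing entropy of $\mathcal{F}_k:=\{X\mapsto\Lambda_k^\rho(X):\rho\in[0,D]\}$. Fixing $\varepsilon>0$ and setting $g_k(\rho):=\expec[\Lambda_k^\rho]$, which is itself non-decreasing and bounded by $2M^2$, one chooses a partition $0=\rho_0<\cdots<\rho_N=D$ with $N\lesssim\varepsilon^{-2}$ such that $g_k(\rho_i)-g_k(\rho_{i-1})\le\varepsilon^2/(2M^2)$ for each $i$; this is possible through a level-set construction, with any atoms of $g_k$ exceeding the threshold inserted as grid points (of which there are at most $O(\varepsilon^{-2})$). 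Monotonicity of the paths yields the bracket $\Lambda_k^{\rho_{i-1}}\le\Lambda_k^\rho\le\Lambda_k^{\rho_i}$ on $[\rho_{i-1},\rho_i]$, and the uniform bound then implies
\[
\expec\bigl(\Lambda_k^{\rho_i}-\Lambda_k^{\rho_{i-1}}\bigr)^2 \le 2M^2\bigl(g_k(\rho_i)-g_k(\rho_{i-1})\bigr)\le\varepsilon^2.
\]
Consequently $\log N_{[]}(\varepsilon,\mathcal{F}_k,L^2(\prob))\lesssim\log(1/\varepsilon)$, the bracketing integral converges, and the bracketing CLT for empirical processes yields that $\mathcal{F}_k$ is $\prob$-Donsker.

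Joint convergence in the product space $\ell^\infty_K([0,D])$ then follows by combining coordinate-wise asymptotic tightness with joint finite-dimensional convergence, the latter being a direct consequence of the classical multivariate CLT applied to the i.i.d.\ bounded vectors $\{(\Lambda_i^{\rho_1},\dots,\Lambda_i^{\rho_m})\}_{i}$. The limit is thus a tight centred Gaussian element of $\ell^\infty_K([0,D])$ with covariance $\operatorname{Cov}(\Gproc(\rho),\Gproc(\rho'))=\operatorname{Cov}(\Lambda^\rho,\Lambda^{\rho'})$. I expect the main obstacle to be the bracketing step: one must check carefully, under general laws $\prob$, that the partition really achieves $L^2$-size $\varepsilon$ in spite of possible large atoms of $g_k$, but the monotonicity and uniform boundedness of $g_k$ keep the number of grid points at $O(\varepsilon^{-2})$ and the entropy rate is preserved.
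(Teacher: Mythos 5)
Your proof is correct and follows the same overall architecture as the paper's (finite--dimensional convergence via the multivariate CLT for the bounded i.i.d.\ spectra, asymptotic tightness via a bracketing entropy bound and a bracketing Donsker theorem, then assembly in the product space via \cite[Theorem~1.4.8]{van1996weak}), but the key step --- the construction of the brackets --- is done by a genuinely different route. The paper validates the brackets $[\lambda_k^s,\lambda_k^t]$ by Cauchy interlacing and then bounds their size through $\expec\lvert\lambda_j^s-\lambda_j^t\rvert\le\expec\lVert L^t-L^s\rVert_{op}\le 2\,\expec\bigl(\max_j\mu_j\sum_{\ell}\mu_\ell\,\prob(s<d_{j\ell}\le t\mid\mu)\bigr)$, placing grid points at quantiles of each of the $K(K-1)/2$ conditional pairwise-distance distributions and taking the union, which yields $\normal_{[]}(\eps\,2\mathfrak{m}^2,\fclass,L_2(\prob))\le K(K-1)\eps^{-2}$. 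You instead establish path monotonicity via Weyl's inequality applied to the positive-semidefinite increment in \eqref{eq: ConvProp} and grid directly on the quantiles of the mean eigenvalue function $g_k(\rho)=\expec\Lambda_k^\rho$, using $\expec(\Lambda_k^{\rho_i}-\Lambda_k^{\rho_{i-1}})^2\le 2M^2\bigl(g_k(\rho_i)-g_k(\rho_{i-1})\bigr)$. Your version is more direct and avoids the detour through operator norms and the $K(K-1)$ distance marginals (it also produces the sharper entropy constant, with no $K$-dependence beyond the envelope), while the paper's version makes explicit how the entropy is governed by the conditional distance distributions, which it reuses later for the concentration inequality; both give $\log\normal_{[]}(\eps)\lesssim\log(1/\eps)$ and a finite bracketing integral. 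The only point you gloss over that the paper treats explicitly is measurability of the suprema (handled there by pointwise measurability via rational $\rho$, using left-continuity of $\rho\mapsto\1_{\{d<\rho\}}$); this is the same left-continuity you already invoke, so it is a presentational rather than a substantive gap.
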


\bgroup
\begin{rmk}[Smooth functionals of $\Gproc(\rho)$]
Theorem~\ref{thm: LimThm} extends to suitable differentiable functions of the $\rho$-spectrum, as the limit law process can then be determined by the delta method \cite[Chapter~20]{vaart_1998}. For instance, in the case of metric spaces endowed with the uniform measures, the number of spanning trees of the auxiliary graph is such a function. Indeed, Proposition~\ref{sec: spanTrees} states that, for sufficiently large $\rho$, the number of spanning trees is determined by the product of the non-null eigenvalues.
\end{rmk}
\egroup

\begin{proof}[Proof of Theorem~\ref{thm: LimThm}]

We will prove that each coordinate $\sqrt{n} ( \widehat{\lambda^{\rho}_j} - \lambda^{\rho}_j)$, $j \in \llbracket K \rrbracket$, of the above process converges weakly in $\ell^\infty([0,D])$ to a separable limit and conclude that the joint process converges to a separable limit as in \cite[Theorem~1.4.8]{van1996weak}.

Before addressing the convergence of each coordinate, let us establish the joint convergence of $\sqrt{n} ( \widehat{\lambda^{\rho}}  - \lambda^{\rho}) =: \mathbf{\Lambda}_n^\rho$ for a finite collection $\rho_1, \ldots, \rho_m$. 
Recall the linear form of the estimator given in Eq.~\eqref{eq: LamEstForm} and consider a fixed set $\rho_1, \ldots, \rho_m$. As the observations are independent and the spectra bounded by Lemma~\ref{lem: BoundEigGraph}, the central limit theorem in $\reals^{K\times m}$ provides the sought joint convergence. 

Weak convergence to a Gaussian process of each coordinate will follow from \cite[Theorem~1.5.4]{van1996weak}. 
To this end, we need to establish convergence of the marginals of each coordinate and asymptotic tightness. The first requirement follows from the CLT similarly to the joint convergence of  $\{\mathbf{\Lambda}_n^\rho\}_{\rho \in \{\rho_1, \ldots, \rho_m\}}$ established just above.
It thus remains to prove the asymptotic tightness part. From \cite[Theorem~1.5.7]{van1996weak}, we know that the latter is implied by total boundedness of $([0,D], \dist)$ and $\dist$-equicontinuity, i.e.,
 for all $\eps, \eta>0$, there exists $\delta>0$ such that  
\begin{equation}
\label{eq: AEC}
\limsup_{n \to \infty} \prob^* \left( \sup_{\dist(s,t)<\delta}  \left \lvert \mathbf{\Lambda}_{n,j}^s -\mathbf{\Lambda}_{n,j}^t \right\rvert > \eps \right) < \eta,
\end{equation}
for a semi-metric $\dist$, with $\prob^*$ denoting outer probability. Instead of working with the maximum difference between the sorted eigenvalues, we will show that equicontinuity holds irrespective of the eigenvalue difference.
As total boundedness and equicontinuity are equivalent to showing that the class of function is $\prob$- Donsker \cite[Corollary~2.3.12]{van1996weak}, we derive the latter property from a bracketing argument that we now develop.
 
By choosing different $\rho$ parameters called $s$ and $t$ in the sequel, with $s<t$, one can bracket the class of function $\mathcal{F}:=(\lambda_k^\rho)_{\rho \in [0,D]}$. The dependence of the eigenvalue on the distance matrix and the probability measure of the metric-measure space are silenced to ease the reading. The fact that these brackets for the eigenvalues fulfil the definition of brackets, for any element of $S^K_+ \times \reals^K$, is a direct consequence of Cauchy's interlacing theorem and the fact that the smallest eigenvalue of a graph Laplacian is zero. 
Further note that this class of functions is pointwise measurable, i.e., the countable subset 
\[
\mathcal{G}:=(\lambda_j^\rho)_{\rho \in [0,D] \cap \QQ}
\] 
 is such that for each $f$ in $\mathcal{F}$ there exists a sequence $g_m$ in $\mathcal{G}$ with $g_m(x) \to f(x)$ for every $x \in S^K_+ \times \reals^K$. This is a mere consequence of the fact that there exists a sequence of rational numbers converging to any real number. 
Further, using boundedness and constructing sequences in $[0,D]$ converging to the elements of a maximising sequence, it  can be shown that 
\[
\sup_{\rho \in [0,D]} \lvert \mathbf{\Lambda}_{n,j}^\rho \rvert = \sup_{\rho \in [0,D]\cap\QQ } \lvert \mathbf{\Lambda}_{n,j}^\rho \rvert.
\]
As the supremum on the right hand side is measurable, the outer expectations in the latter case reduce to classical ones. 
To avoid notational burden, we will remove the rational condition of $\rho$ in the sequel.
It further holds that
\begin{align*}
B_{s,t}:=\expec \left(  \lvert \lambda_j^s - \lambda_j^t \rvert  \right) & \le \expec\left(  \left\lVert \sum_{e \in E} \mu_{in(e)}\mu_{out(e)} \1_{\{s<d_e\le t \}} b_e b_e^\top \right \rVert_{op}  \right) \\
&  \le  \expec\left(  \left(2 \max_j \mu_{j} \sum_{j\sim l} \mu_{\ell} \1_{\{s<d_{j,\ell}\le t \}} \right )  \right), 
\end{align*}
where the first inequality follows from the fact that, for two $n \times n$ matrices $A,B$,
\[
\lvert \lambda_i(A+B) - \lambda_i(A) \rvert \le \lVert B\rVert_{op} := \max(\lvert \lambda_1(B) \rvert, \lvert \lambda_n(B) \rvert ).
\]
The second inequality holds by Lemma~\ref{lem: BoundEigGraph}. %
We then have, using the tower property of the expectation,
 \begin{align*}
B_{s,t}& \le  \expec\left( \max_j \mu_{j} \sum_{j\sim l} \mu_{\ell} \prob \left (s<d_{j,\ell}\le t  \vert \mathbf{\mu} \right )  \right) .
\end{align*}
From this inequality, a set of points can be chosen and $\eps$-brackets in $L_1(\prob)$ be constructed. To ensure that each of the $K(K-1)/2$ distribution of distances be well covered, it is sufficient to take the union of the points that would give $\eps$-brackets for the distribution of distance between one single pair $j,\ell$. By assumption, the total mass of each mm-space is bounded. Let us denote this bound by $\mathfrak{m}$. We can further notice that, up to rescaling by $2  \mathfrak{m}^2$ and exploiting homogeneity of the norms, one can control the $L_2$ bracketing number from the $L_1$ one. We thus have that 
\begin{equation}
\label{eq: brackNum}
\normal_{[]}(\eps 2\mathfrak{m}^2 ,\fclass, L_2(\prob)) \le K (K-1) \eps^{-2}.
\end{equation}
The bracketing integral is thus finite and it follows from \cite[Theorem~19.5]{vaart_1998} that the class of functions is $\prob$-Donsker, which implies tightness.%
\end{proof}

\subsection{Two sample test}

In this section, we address the question of comparing statistically the mean $\rho$-spectra, potentially truncated, of two independent samples of sizes $n$ and $m$, respectively. 
That is, given two samples of mm-spaces with associated expected $\rho$-spectra ${\Lambda}_1^\rho$ and ${\Lambda}_2^\rho$, we consider the hypothesis testing problem
\begin{equation}
\label{eq: NullHyp}
 H_0 : {\Lambda}_1^\rho = {\Lambda}_2^\rho, \forall \rho \qquad \text{vs.} \qquad H_1: \exists \rho : {\Lambda}_1^\rho \neq {\Lambda}_2^\rho. \tag{*}  
\end{equation}
We suggest to recourse on the following test statistic  
\[
T_{n,m} := \sup_\rho  \big \lVert \widehat{\lambda_{n,1}^\rho}- \widehat{\lambda_{m,2}^\rho} \big\rVert_\infty,
\]
where $\widehat{\lambda_{n,1}^\rho}$ denotes the estimator of ${\Lambda}_1^\rho$ obtained as in Eq.~\eqref{eq: LamEstForm}.
and thus consider the test 
\[
(c)\qquad \phi := \1(T_{n,m} > c_\alpha ), 
\]
for any prescribed significance level $\alpha \in [0,1]$. To apply the test, the distribution of $T_{n,m}$ is required. From this, the critical value $c_\alpha$ can be obtained as a quantile. We propose two ways to achieve this that we develop in the next subsections.
\bgroup
\begin{rmk}[mm-spaces of different size]
In the case where the mm-spaces do not have the same size, we consider truncated spectra as is done for ShapeDNA in \cite{reuter2006laplace}. That means that we only keep the $K'$ smallest eigenvalues. This choice is made for the proposed approach to be a coherent generalisation of classical data analysis on manifolds. It also makes sense as the Euclidean distance between the truncated spectra involves summing squared distances between individual eigenvalues that have a comparable meaning.  Recall that the distance between $\lambda_2^\rho$ and $\tilde{\lambda}_2^\rho$, say, expresses a difference of connectivities of the underlying auxiliary graphs, for instance. 

Another approach, that we do not pursue here, could be to identify each $\rho$-spectrum with its spectral measure $\sum_{j\in \llbracket K \rrbracket} \delta_{\lambda_j^\rho}/K$. For each group, one could compute a Wasserstein barycenter and then compare the two groups using the Wasserstein distance between their respective Wasserstein barycenters. In the case where each mm-space of the sample has the same number of nodes, this latter approach coincide with the one above.   \end{rmk}
\egroup
\subsubsection{Concentration inequality}
\bgroup
A first way to carry out the test in $(c)$ is to recourse to concentration bounds. They have the advantage of being valid for all $n,m$. Unfortunately, they will depend on a bound on the mass and a universal constant. Therefore, the subsequent bound is only practically useful when these constants can be made explicit. 

\begin{proposition}
Consider two independent samples of finite mm-spaces as above with finite sample sizes $n$ and $m$ and assume that the total mass of each mm-space is bounded above by $\mathfrak{m}$. Then, it holds that
\[
\prob ( T_{n,m} > t) \le \sum_{j=2}^K \sum_{\xi \in \{n,m\}}  \exp \left(- \frac{ (t/2- \expec Z_j)^2 }{2 \nu_{j,\xi} +  2  \mathfrak{m}^2 t  /3}\right) ,  
\]
with 
\begin{align*}
\nu_{j,\xi} &\le 16\mathfrak{m}^4 \kappa \sqrt{ \frac{2}{\xi}\log\left( \max(K/ \mathfrak{m} , e^2 \mathfrak{m})  \right)} + 4 \xi\mathfrak{m}^4, \\
\expec Z_{j,\xi} &\le \kappa 4 \mathfrak{m}^2   \sqrt{ \frac{2}{\xi}\log\left( \max(K/ \mathfrak{m} , e^2 \mathfrak{m})  \right)}
\end{align*}
and $\kappa$ is the universal constant in Ossiander's bracketing theorem \cite[Proposition~6.7]{talagrand1994sharper}.
\end{proposition}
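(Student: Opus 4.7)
The plan is to reduce $T_{n,m}$ to a sum of one-sample empirical-process suprema and then apply Bousquet's version of Talagrand's inequality, with the expected supremum and variance proxy controlled by Ossiander's bracketing bound derived from the bracketing estimate~\eqref{eq: brackNum}.

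First, by working coordinate-wise and adding/subtracting the two population spectra, I would write, for each $j$,
\[
\sup_\rho|\widehat{\lambda_{n,1,j}^\rho} - \widehat{\lambda_{m,2,j}^\rho}| \le \sup_\rho|\widehat{\lambda_{n,1,j}^\rho} - \lambda_{1,j}^\rho| + \sup_\rho|\lambda_{1,j}^\rho - \lambda_{2,j}^\rho| + \sup_\rho|\widehat{\lambda_{m,2,j}^\rho} - \lambda_{2,j}^\rho|.
\]
Under the null~\eqref{eq: NullHyp} the middle term vanishes, leaving two empirical-process suprema coming from independent samples. Since $\lambda_1^\rho \equiv 0$, the index $j=1$ contributes nothing, and the union bound together with $\{a+b>t\}\subseteq\{a>t/2\}\cup\{b>t/2\}$ reduces the problem to controlling $\prob(Z_{j,\xi} > t/2)$ for $Z_{j,\xi} := \sup_\rho|\widehat{\lambda_{\xi,j}^\rho} - \lambda_j^\rho|$, $j \in\{2,\ldots,K\}$ and $\xi \in \{n,m\}$, which produces the displayed double sum.

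To bound each of these tails, I would first replace $\sup_\rho$ by a countable supremum over $[0,D]\cap\mathbb{Q}$, exactly as in the proof of Theorem~\ref{thm: LimThm}, to secure measurability. I would then apply Bousquet's inequality to the centered empirical process indexed by $\mathcal{F} := \{\lambda_j^\rho : \rho \in [0,D]\cap\mathbb{Q}\}$, whose envelope is $2\mathfrak{m}^2$ by Lemmas~\ref{lem: largestEig} and~\ref{lem: BoundEigGraph} combined with the total-mass assumption. Recentering $\prob(Z_{j,\xi} > t/2) = \prob(Z_{j,\xi} - \expec Z_{j,\xi} > t/2 - \expec Z_{j,\xi})$ then delivers the $(t/2 - \expec Z_j)^2$ numerator and the $2\nu_{j,\xi} + 2\mathfrak{m}^2 t /3$ denominator in the displayed exponential.

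Finally, the two explicit estimates on $\expec Z_{j,\xi}$ and on $\nu_{j,\xi}$ would follow from Ossiander's bracketing theorem and the bracketing estimate $\mathcal{N}_{[]}(\eps\,2\mathfrak{m}^2,\mathcal{F},L_2(\prob))\le K(K-1)\eps^{-2}$. A substitution $\eps' = \eps/(2\mathfrak{m}^2)$ in the bracketing integral, followed by the standard estimate of $\int_0^1\sqrt{\log(A\eps'^{-2})}\,d\eps'$, yields the claimed bound on $\expec Z_{j,\xi}$ with the logarithm $\log\max(K/\mathfrak{m}, e^2\mathfrak{m})$ arising from the envelope truncation and the case analysis needed to make the bracketing bound non-vacuous. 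The variance proxy then comes from the Bousquet identity $\nu = \xi\sigma^2 + 2c\,\expec(\text{sum-supremum})$, using the pointwise bound $\sigma^2 \le 4\mathfrak{m}^4$ (giving the additive $4\xi\mathfrak{m}^4$) and $c = 2\mathfrak{m}^2$ applied to the just-derived bound on $\expec Z_{j,\xi}$ (giving the $16\mathfrak{m}^4\kappa\sqrt{\,\cdot\,}$ term). The main obstacle is tracking all these constants cleanly through the bracketing integral and its envelope cutoff, in particular explaining the appearance of $\max(K/\mathfrak{m}, e^2\mathfrak{m})$ inside the logarithm, which comes from splitting into the cases where the bound $K(K-1)\eps^{-2}$ is and is not larger than one.
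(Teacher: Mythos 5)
Your proposal is correct and follows essentially the same route as the paper: reduce to two one-sample suprema via the triangle inequality and the union bound under the null, apply Bousquet's inequality to each centred empirical process with envelope $2\mathfrak{m}^2$, and control $\expec Z_{j,\xi}$ and the variance proxy via Ossiander's bracketing theorem using the bracketing estimate~\eqref{eq: brackNum}; all the constants you track ($4\xi\mathfrak{m}^4$, $16\kappa\mathfrak{m}^4$, the $\max(K/\mathfrak{m},e^2\mathfrak{m})$ inside the logarithm) match the paper's derivation. Your explicit remarks on why $j=1$ drops out and on restoring measurability via a countable supremum are details the paper leaves implicit, but they do not change the argument.
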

\begin{proof}
First notice that, under the null hypothesis in~\eqref{eq: NullHyp}, 
\[
\prob ( T_{n,m} > t) \le  \prob \left( \sup_\rho \lVert \mathbf{\Lambda}_n^\rho \rVert_\infty > \frac{t\sqrt{n}}{2} \right) +  \left( \sup_\rho \lVert \mathbf{\Lambda}_m^\rho \rVert_\infty  > \frac{t\sqrt{m}}{2} \right).  
\]
We will use the union bound and Bousquet's inequality (\cite{bousquet2003concentration}) to control each term in the right hand side of the above display. 
The union bound seems unavoidable as controlling each eigenvalue is far from trivial.  Setting $Z_j:= \sup_\rho\lvert \widehat{\lambda^{\rho}_{1,j}} - \lambda^{\rho}_j\rvert$, in this case, the Bousquet inequality  reads 
\[
\prob( Z_j > \expec Z_j + t ) \le \exp\left( - \frac{t^2}{2 \nu_{j,n} + 2t U /3} \right),
\]
where $U$ is the bound on the envelope function and, here,  
\[
\nu_{j,n} = 2U \expec Z_j + \sum_{i =1}^n  \sup_\rho \expec \left\lvert \widehat{\lambda^{\rho}_{1,j}} - \lambda^{\rho}_j\right\rvert^2 \le 2U \expec Z_j+ nU^2.
\] 
The main element remaining is the control of $\expec Z_j$.  To this end, we will use Ossiander's bracketing theorem. 
The latter reads, for a general function class $\fclass$,
\[
\expec \sup_{f \in \fclass} \left \lvert \sum_{i=1}^n f(X_i) - n \expec f \right \rvert  \le \kappa \sqrt{n} \int_0^\infty \sqrt{\log \normal_{[]}(\eps  ,\fclass, L_2(\prob))} \ \diff \eps,
\]
 for some universal constant $\kappa$.
From the latter we have by Eq.~\eqref{eq: brackNum} that, for all $j \in \llbracket K \rrbracket$, 
\begin{align*}
n \expec Z_j 
&\le \kappa \sqrt{n} \int_0^{2\mathfrak{m}^2} \sqrt{\log\left( 4 \mathfrak{m}^2 K(K-1) \eps^{-2}\right)} \ \diff \eps \\
&\le \kappa \sqrt{2n} \int_0^{2\mathfrak{m}^2} \sqrt{\log\left( 2 \mathfrak{m} K \eps^{-1}\right)} \ \diff \eps \\
& \le \kappa 4 \mathfrak{m}^2 \sqrt{2n}  \sqrt{ \log\left( \max(K/ \mathfrak{m} , e^2 \mathfrak{m})  \right)} , 
\end{align*}
where we used the fact that if $\log (C/c) \geq 2$, 
\[
\int_0^c \sqrt{\log\left(\frac{C}{x}\right)} \diff x \le 2c \sqrt{\log\left(\frac{C}{c}\right)}.
\]
It follows that 
\[
\nu_{j,n} \le 16\mathfrak{m}^4 \kappa \sqrt{ \frac{2}{n}\log\left( \max(K/ \mathfrak{m} , e^2 \mathfrak{m})  \right)} + 4n\mathfrak{m}^4.
\]
By the union bound, it further holds 
\begin{align*}
 \prob \left( \sup_\rho \lVert \mathbf{\Lambda}_n^\rho \rVert_\infty > \frac{t\sqrt{n}}{2} \right) 
 &\le \sum_{j=2}^K  \prob \left( Z_j > \frac{t}{2}\right)\\
 & \le\sum_{j=2}^K \exp \left(- \frac{ (t/2- \expec Z_j)^2 }{2 \nu_{j,n} + 2(t/2- \expec Z_j) 2 \mathfrak{m}^2  /3}\right).
\end{align*}
The claim follows.
\end{proof}

\begin{rmk}[Room for improvement]
Looking at the proof, there are various places where additional knowledge or regularity assumptions would guarantee sharper inequalities. 
Additional assumptions for the distances would help to improve the factor $K(K-1)$ and assumptions on $\max_{j \in \llbracket K \rrbracket}\mu_j$ would reduce the bracketing number.
Finally note that a refined understanding of the Laplacian spectrum could also greatly help, in particular to avoid using the union bound. %
\end{rmk}
\egroup
\subsubsection{Bootstrap}
Note that, under the null hypothesis in~\eqref{eq: NullHyp}, it holds that 
\begin{equation}
\label{eq: limProcTest}
T_{n,m} \dto \sup_\rho \big\lVert \Gproc(\rho) - \Gproc'(\rho)\big \rVert_\infty, \qquad n \to \infty, \tag{**}
\end{equation}
by the continuous mapping theorem, where $\Gproc(\rho)$ and $\Gproc'(\rho)$ are two independent copies of the limiting process from Theorem~\ref{thm: LimThm}.

As it is difficult to derive the distribution \eqref{eq: limProcTest} under the null hypothesis explicitely, we suggest to recourse to a bootstrap procedure. 
First, consider $\{X_i^\star\}_{i=1}^n$ a bootstrap sample from $\prob_n$ and its associated empirical measure $ \prob_n^\star$. The bootstrap process is defined as 
\[
\big\{\Gproc_n^\star f\big\}_{f \in \fclass} := \left\{\sqrt{n}\big( \prob_n^\star f- \prob_n f\big)\right\}_{f \in \fclass}. 
\]
The procedure then goes as follows  \cite[Chapter~16]{efron1994introduction}.

\begin{enumerate}
\item Generate $B$ bootstrap samples for the two samples $\Gproc_{n,1}^{\star,\mathfrak{b}}$ and  $\Gproc_{m,2}^{\star,\mathfrak{b}}$ with $\mathfrak{b} \in \llbracket B \rrbracket$. 
\item Compute the $B$ realisations of 
\[
 \theta_\mathfrak{b}:= \sup_\rho \lVert \Gproc_{n,1}^{\star,\mathfrak{b}}(\rho) - \Gproc_{m,2}^{\star,\mathfrak{b}}(\rho) \rVert_\infty.
\]
\item Approximate the p-value by 
\[
\hat{p}_{n,obs}:= \frac{1}{B} \sum_{\mathfrak{b} = 1}^B \1(T_{n,m,obs} <  \theta_\mathfrak{b} ),
\]
where $T_{n,m,obs}$ is the observed value of the test statistic $T_{n,m}$.
\end{enumerate}
Let $p_{n,obs}$ denote the p-value that one would compute, should one know the distribution of $\Gproc_{n,1}(\rho)$ \textemdash and therefore $\sup_\rho \lVert \Gproc_n(\rho) - \Gproc_n'(\rho) \rVert_\infty$  under the null hypothesis. 
\begin{proposition}
The bootstrap procedure is consistent for the testing problem as
\[
\lvert \hat{p}_{n,obs} -{p}_{n,obs} \vert \overset{\prob}{\to} 0, \quad \text{as } n,B \to \infty.
\]
\end{proposition}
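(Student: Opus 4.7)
The plan is to decompose the error $|\hat{p}_{n,obs} - p_{n,obs}|$ into a Monte Carlo component, controlled by $B$, and a bootstrap approximation component, controlled by the sample sizes, and to address each with standard empirical process machinery. Throughout I assume implicitly that $m\to\infty$ together with $n$.

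Denote by $\hat F_{n,m,B}$ the empirical CDF of the $B$ bootstrap replicates $\{\theta_\mathfrak{b}\}_{\mathfrak{b}=1}^B$, by $F_{n,m}$ the CDF of $\sup_\rho \|\Gproc_{n,1}^\star(\rho)-\Gproc_{m,2}^\star(\rho)\|_\infty$ conditional on the data, and by $F$ the CDF of $\sup_\rho \|\Gproc(\rho)-\Gproc'(\rho)\|_\infty$, which by \eqref{eq: limProcTest} is the null limit law of $T_{n,m}$. Since $\hat p_{n,obs}=1-\hat F_{n,m,B}(T_{n,m,obs})$ and $p_{n,obs}=1-F(T_{n,m,obs})$, it suffices to show $\sup_t|\hat F_{n,m,B}(t)-F(t)|\overset{\prob}{\to}0$, and the triangle inequality splits this into controlling $\sup_t|\hat F_{n,m,B}-F_{n,m}|$ and $\sup_t|F_{n,m}-F|$.

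The first, Monte Carlo, term is handled by the classical Glivenko--Cantelli theorem: conditionally on the observed data, the $\theta_\mathfrak{b}$'s are i.i.d.\ draws from $F_{n,m}$, so the empirical CDF converges uniformly almost surely as $B\to\infty$. The second term is the substantive part. Here I would invoke the $\prob$-Donsker property of $\fclass=(\lambda_k^\rho)_{\rho\in[0,D]}$ already obtained in the proof of Theorem~\ref{thm: LimThm} (via the bracketing bound \eqref{eq: brackNum}) and apply the conditional bootstrap central limit theorem \cite[Theorem~3.6.2]{van1996weak}: $\Gproc_{n,1}^\star$ converges weakly to $\Gproc$ conditionally in probability, in $\ell_d^\infty([0,D])$, and similarly $\Gproc_{m,2}^\star$ to an independent copy $\Gproc'$. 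The continuous mapping theorem applied to the $\ell^\infty$-continuous functional $(z_1,z_2)\mapsto \sup_\rho\|z_1(\rho)-z_2(\rho)\|_\infty$ then gives $F_{n,m}\dto F$ in distribution (in probability with respect to the data).

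To upgrade the weak convergence $F_{n,m}\dto F$ into the uniform convergence that the triangle-inequality argument demands, P\'olya's theorem requires that $F$ be continuous. This is the step I expect to be the main obstacle: continuity of the CDF of the supremum of a centred Gaussian process follows from Tsirelson-type anti-concentration for Gaussian suprema under the mild non-degeneracy hypothesis that $\mathrm{Var}\,\Gproc(\rho_0)_k>0$ for some $k \in \llbracket K \rrbracket$ and some $\rho_0 \in [0,D]$, which is the generic case and can be added as a standing assumption. Combining the three ingredients yields $\sup_t|\hat F_{n,m,B}(t)-F(t)|\overset{\prob}{\to} 0$; since $T_{n,m,obs}$ itself converges in distribution under $H_0$ to the continuous law $F$, evaluating at this random argument preserves the bound and the claim follows.
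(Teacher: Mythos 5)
Your route matches the paper's in its essential skeleton: both arguments rest on the $\prob$-Donsker property of $\fclass=(\lambda_k^\rho)_{\rho\in[0,D]}$ established in the proof of Theorem~\ref{thm: LimThm}, a conditional bootstrap central limit theorem (the paper cites Theorem~2.4 of \cite{gine1990bootstrapping}, you cite Theorem~3.6.2 of \cite{van1996weak}; these are interchangeable here), componentwise-to-joint convergence, and a law-of-large-numbers argument for the Monte Carlo error as $B\to\infty$. Where you genuinely add value is in the last mile: the paper compresses the passage from weak convergence of the bootstrap process to consistency of the plug-in p-value into the single sentence ``the claim then follows from the law of large numbers,'' whereas you make explicit that one must (i) apply the continuous mapping theorem to the functional $(z_1,z_2)\mapsto\sup_\rho\lVert z_1(\rho)-z_2(\rho)\rVert_\infty$, (ii) upgrade the resulting distributional convergence of $F_{n,m}$ to $F$ into uniform convergence via P\'olya's theorem, and (iii) justify the continuity of the limit law $F$ of $\sup_\rho\lVert\Gproc(\rho)-\Gproc'(\rho)\rVert_\infty$ by Gaussian anti-concentration under a non-degeneracy assumption. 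Point (iii) in particular is a hypothesis the paper never states but implicitly needs (if the limit CDF had an atom at the relevant quantile, evaluating at the random argument $T_{n,m,obs}$ could fail); flagging it as a standing assumption is the right call. The only minor caveat is that you should also state explicitly that the two bootstrap processes $\Gproc_{n,1}^\star$ and $\Gproc_{m,2}^\star$ converge \emph{jointly} to the independent pair $(\Gproc,\Gproc')$ — this follows from independence of the two samples and of the two resampling schemes, but it is the hypothesis under which the continuous mapping step in your argument is licensed.
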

\begin{proof}
Note that from the convergence provided by Theorem~\ref{thm: LimThm}, the considered class of functions is $\prob$-Donsker. Theorem~2.4 in \cite{gine1990bootstrapping}
states that\footnote{The statement is slightly reworded in comparison to the original version and appears in \cite{chazal2013bootstrap}.} a function class
$\fclass$ is $\prob$-Donkser if and only if $\Gproc_n^\star$ converges in distribution to $\Gproc$ in $\ell_\infty(\fclass)$. This theorem holding for each component can be used to conclude that the convergence holds for the entire distribution using Theorem~1.4.8 in \cite{van1996weak}. The claim then follows from the law of large numbers.
\end{proof}

\bgroup
\subsection{Example}
\label{sec: exInference}
Recall the example in Section~\ref{sec: firstExamples}. For the largest eigenvalue and the second smallest or Fiedler eigenvalue we represent the mean over 11 replications for the camels and 10 replications for the lions along with pointwise  95\%-confidence bands based on student quantiles. The rationale behind the choice of these two eigenvalues is that the Fiedler eigenvalue is a measure of connectivity, while the largest eigenvalue is driven by (a transformation of) the weighted maximum local distribution of distances. 

\begin{figure}[h!]
\label{fig: Inf}
\begin{center}
\includegraphics[width=0.45\textwidth]{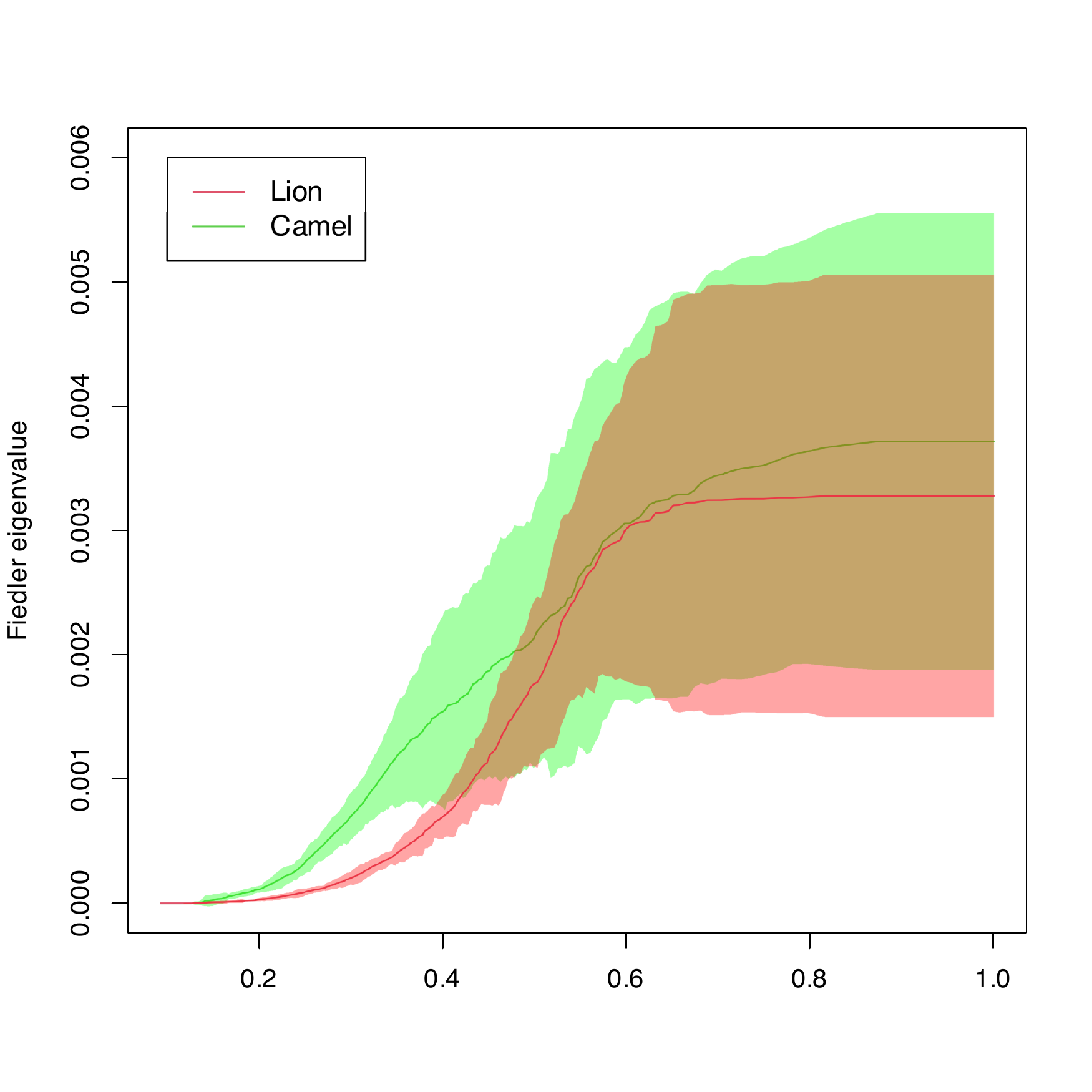}
\includegraphics[width=0.45\textwidth]{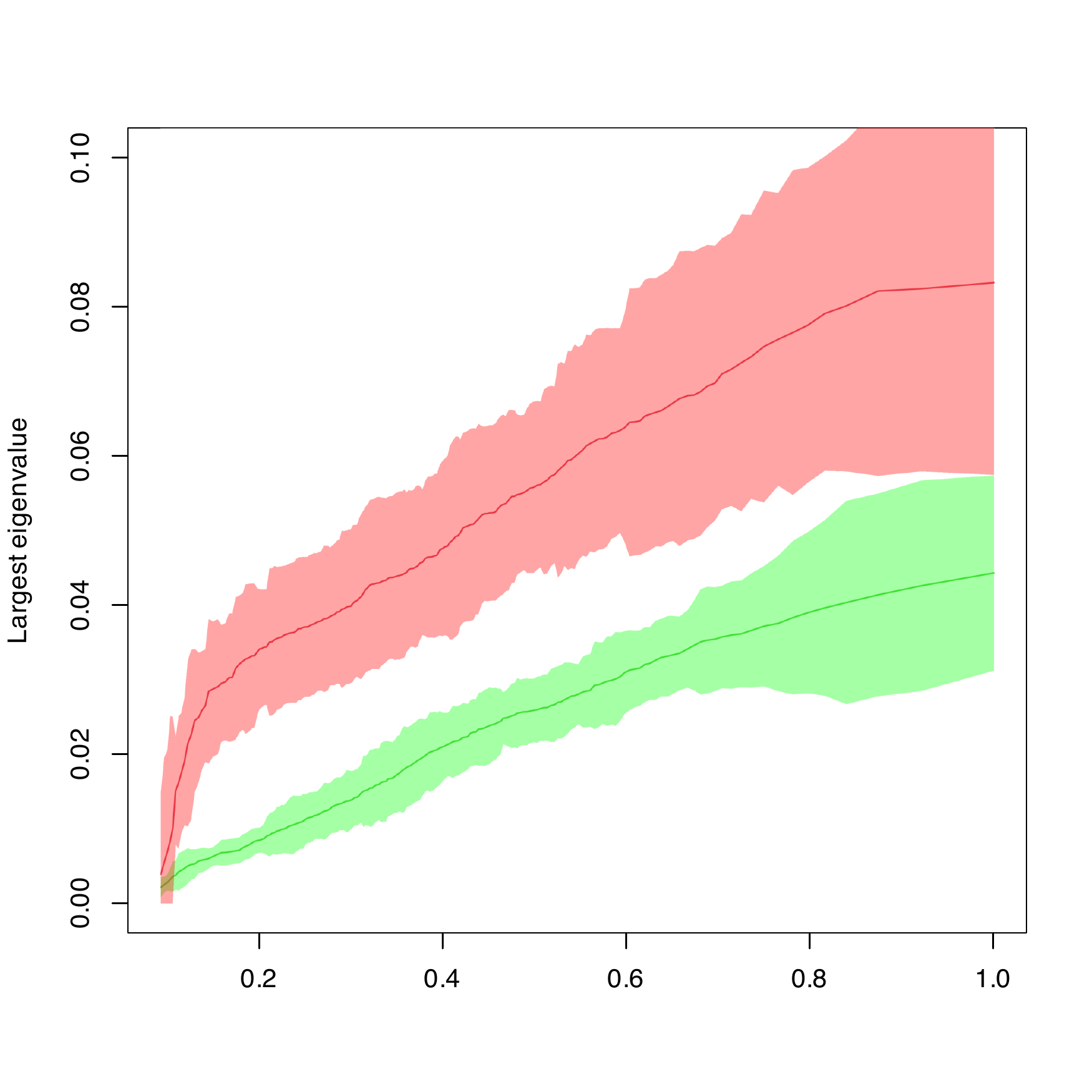} \\
\vspace{-0.8cm}
\hfill $\rho $ \hfill $\rho \qquad \qquad$
\end{center}
\caption{ Left) Pointwise means and confidence bands for the Fiedler eigenvalue as a function of $\rho$. The mean is given by the continuous line while the confidence bands are represented by the shaded area of the corresponding colour. Right) Pointwise means and confidence bands for the largest eigenvalue as a function of $\rho$. }
\end{figure}

We clearly find from Fig.~\ref{fig: Inf} that the connectivities of the auxiliary graphs are different for small values of $\rho$, while the largest eigenvalues have a completely different behaviour at all $\rho$-scales. 
\egroup

\section*{Acknowledgments}
Both authors are grateful to S.~K\"oster, A.~Blob, and D.~Ventzke for the preparation and preprocessing of some data used in this paper. The first author is also thankful to C.~A.~Weitkamp for numerous discussions.

\newpage
\bibliographystyle{siamplain}
\bibliography{Ref}

\end{document}